  \newtheorem{rem}{\protect\remarkname}
    \newtheorem{main}{Summary of our  Main Result}
\newcommand{\norm}[1]{\left\Vert#1\right\Vert}
\providecommand{\remarkname}{Remark}
\title{Dimension-Independent MCMC Sampling for Inverse Problems
with Non-Gaussian Priors}
\author{Sebastian J. Vollmer\thanks{Department of Statistics, 1 South Parks Road, Oxford OX1 3TG, sebastian.vollmer@stats.ox.ac.uk}}
\begin{document}
\maketitle
\begin{abstract}
The computational complexity of Markov chain Monte Carlo methods for the exploration of complex probability measures is a challenging and important problem both in statistics and the applied sciences. A challenge of particular importance arises in Bayesian inverse problems where the target distribution may be supported on an infinite dimensional state space. In practice this involves the approximation of the infinite dimensional target measure defined on sequences of spaces of increasing dimension bearing the risk of an increase of the computational error. Previous results have established dimension-independent bounds on the Monte-Carlo error of MCMC sampling for Gaussian prior measures. We extend these results by providing a simple recipe to obtain these bounds also in the case of non-Gaussian prior measures and by studying the design of proposal chains for the Metropolis-Hastings algorithm with dimension independent performance. This study is motivated by an elliptic inverse problem with non-Gaussian prior that arises in groundwater flow. We explicitly construct an efficient Metropolis-Hastings proposal based on local proposals in this case and we provide numerical evidence supporting the theory.


\end{abstract}

\begin{keywords} 
MCMC, inverse problems, Bayesian, spectral gaps, non-Gaussian
\end{keywords}

\begin{AMS}
65C40, 60J22, 60J05, 35R30, 62F15
\end{AMS}

\pagestyle{myheadings}
\thispagestyle{plain}
\markboth{Sebastian J. Vollmer}{Dimension-Independent MCMC for Inverse Problems}

\global\long\def\data{y}
\global\long\def\noise{\eta}
\global\long\def\input{a}
\global\long\def\truth{a^{\dagger}}

\global\long\def\prior{{\mu_{0}}}

\global\long\def\posterior{\mu}

\global\long\def\priorCov{\mathcal{C}_{0}}

\global\long\def\nrOfObs{k}

\global\long\def\MHK{P}

\global\long\def\obsCov{\mathcal{C}_{1}}

\global\long\def\dcoef{a}
\global\long\def\disObs{d}

\global\long\def\spInput{X}

\global\long\def\spResponse{Y}

\global\long\def\opObs{\mathcal{G}}

\global\long\def\betterTrace{\sigma_{0}}

\global\long\def\noiseMeasure{\mu_{\noise}}

\global\long\def\nrOfObs{k}

\global\long\def\obsCov{\Gamma}

\global\long\def\pressure{p}

\global\long\def\pressureSpace{V}

\global\long\def\refMeasure{\mu_{0}}

\global\long\def\proposal{Q}

\global\long\def\target{\mu}

\global\long\def\MHK{P}

\global\long\def\densityTarRef{L}
\global\long\def\LOmu{{L_{0}^{2}(\mu)}}
\global\long\def\LOmuO{{L_{0}^{2}(\prior)}}

\newcommand{\lamMax}[2]{\lambda_{\text{max}}^{#1}\left({#2}\right)}
\newcommand{\lamMin}[2]{\lambda_{\text{min}}^{#1}\left({#2}\right)}

\section{Introduction}

The idea of the Bayesian approach to inverse problems is based on the assumption that not all parameter choices are a priori equally likely. Instead, the a priori knowledge about these parameters is modelled as a probability distribution - called the prior. By specifying the distribution of the noise, the parameters and the observed data can then be treated as jointly distributed random variables. Under certain conditions on the prior, model and noise, there exists a unique conditional distribution of the parameters given the data. This distribution is called the posterior and is an update of the prior using the data.  In this way uncertainty can be quantified using the posterior variance or the posterior probability of a set in the parameter space. Usually, the posterior is only expressed implicitly as an unnormalised density with respect to the prior. For this reason Monte-Carlo algorithms are used to approximate posterior expectations. In most cases it is not possible to generate i.i.d. samples from the posterior, instead correlated samples of a Markov chain are used. These algorithms are therefore called Markov-Chain-Monte-Carlo (MCMC) algorithms. For a recent review on Markov Chain Monte Carlo (MCMC) algorithms we refer the reader to \cite{brooks2011handbook}. It is well-known that under appropriate assumptions the error of MCMC algorithms is of order $\mathcal{O}(n^-{\frac{1}{2}})$ in the steps of the algorithms - the same as standard Monte-Carlo methods. However, we are interested in the Bayesian approach to the full infinite-dimensional inverse problem which is reviewed in Section \ref{sec:BIP}. In the infinite-dimensional approach to Bayesian Inverse Problems, the target measure $\posterior$ is a Borel measure on the Banach space $X$. It is given by 
\begin{equation}
\posterior \propto\densityTarRef\, \prior \label{eq:target}
\end{equation}
where $\refMeasure$ is a reference Borel probability measure on $X$ and $L$ is a density.We make no assumptions on the dimensionality of the space $X$ which is why our considerations also apply to infinite dimensional spaces.
 Usually, $\prior$ is the prior and $L$ is proportional to the likelihood.

Even under the restrictive assumption that 
\[
	0<L_\star < L < L^\star<\infty \label{ass:L},
\]
this has only been shown for the simple independence sampler (IS) \cite{2013HoangComplexityMCMC}. The IS algorithm is an Metropolis-Hastings algorithm generating independent proposals from a fixed distribution. This choice of proposals leads to a poor performance especially if the posterior is concentrated.  In \cite{2013HoangComplexityMCMC} results from \cite{Meyn:2009uqa} are used in order to show that the dimension independent convergence property assuming that the Markov chain associated with the Metropolis-Hastings algorithm is $\phi$-irreducible. On function spaces, this condition seems only to be verifiable in special cases such as the IS algorithm.  Nevertheless its performance does not depend on the dimension. We show that the same is true for a large class of  Metropolis-Hastings algorithms after a slight modification. The result is formulated in terms of $L^2$-spectral gaps. The concept of $L^2$-spectral gaps is introduced in Section \ref{sec:summaryMain} as well as a summary of our main results. In Section \ref{sec:overviewNonGaussian}, we explain how this result can be used for different non-Gaussian priors. We concentrate mainly on uniform series priors. We use these priors for our guiding  example - the inverse problem of reconstructing the diffusion coefficient from noisy measurements of the pressure in a Darcy model of groundwater flow. The underlying continuum model then corresponds to a linear elliptic PDE in divergence form, see Sections \ref{sec:Application} and  \ref{sec:num}.\\

Appropriate reviews on are contained in \cite{MR2102218}. The former is a key reference as the Bayesian approach is applied to real world applications using MCMC and optimisation techniques. This reference shows that the resulting methods can compete with state-of-the regularisation techniques in, for example, dental X-ray imaging. Whereas this reference applies the Bayesian method to a discretised version of the inverse problem, the survey article \cite{MR2652785} concerns the Bayesian approach to the full infinite dimensional problem which was originally developed in \cite{cotter-mcmc}. This approach was also taken independently in \cite{lasanen2002Disc,Lasanen2007}.
 
\subsection{Outline}
In Section \ref{sec:overviewNonGaussian}, we introduce the crucial concepts of $L^2$-spectral gaps and lazy chains and we give a brief summary of our main result (Theorem \ref{thm:main}). Subsequently, we discuss how proposals that satisfy the conditions of our main result can be chosen for uniform series and Besov priors in Section \ref{sec:overviewNonGaussian}. In Section \ref{sec:review},
we give a brief exposition to Bayesian inverse problems and Metropolis-Hastings algorithms on general state spaces before reviewing the implications of $L^2$-spectral gaps for the sample average and proving our main theorem using Dirichlet forms. In Section \ref{sec:Application}, we introduce an elliptic inverse problem that received much attention lately. Based on our programme presented in Section \ref{sec:review}, we construct a new class of sampling algorithms for this particular problem, the Reflection Random Walk Metropolis that we denote by RRWM and which satisfies the conditions of our main theorem. In Section \ref{sec:num}, we compare the RRWM, the standard Random Walk Metropolis (RWM) and the IS algorithms using numerical simulations for the posterior arising from this particular inverse problem.

\subsection{$L^2$-Spectral Gaps and a summary of our Main Result}\label{sec:summaryMain}

Let $P$  be a transition kernel of an MCMC algorithm with  invariant measure $\mu$.  The expectation of interest $\mathbb{E}_\mu f$ is approximated by the sample average
\begin{equation}
S_{n}(f)=\frac{1}{n}\sum_{i=1}^{n}f(X_{i}).
\end{equation}
If $S_{n}(f)$ satisfies a CLT, that is
\[
\sqrt{n}\left (S_{n}(f)-\mu(f)\right ) \rightharpoonup \mathcal{N}(0,\sigma_{f,P}^2),
\]
then the constant in $\mathcal{O}(n^{-{\frac{1}{2}}})$ is asymptotically normally distributed. The key quantity is then its asymptotic variance $\sigma_{f,P}^2$  because the smaller it is the better the MCMC kernel is for the function $f$ at hand. We consider Markov chains that are reversible as we are dealing with Metropolis-Hastings algorithms (see Section \ref{sec:MCMCGeneral}). In this case the  $L^2$-spectral gap $1-\beta$ is the right notion of convergence of the Markov chain because it guarantees a CLT as well as giving the bound 
 \[
 \sigma_{f,P}^{2} \leq \frac{2}{1-\beta} \sigma^2_{f,\mu}<\infty
 \]
where $\sigma^2_{f,\mu}$ is the standard deviation of $f$ with respect to $\mu$ (see Section \ref{sec:impl} for more details).  This means roughly that $\frac{2}{1-\beta}$ more steps of the Markov chain are needed than i.i.d. samples from $\mu$ to get the Monte-Carlo error to a similar or better level. 
For a sequence of transition kernels and target measures the dependence on, for example, the dimension $d$ can be quantified by the dependence of the constant in $\mathcal{O}(n^{-\frac{1}{2}})$ on $d$. For a sequence of transition kernels and target measures the dependence on, for example the dimension $d$, can be quantified by the dependence of the constant in $\mathcal{O}(n^{-\frac{1}{2}})$ on $d$. A dimension independent bound on the Monte Carlo error can therefore be obtained by proving a dimension independent lower bound on the spectral gap. 

In order to define {$L^{2}$-spectral gaps, recall that a Markov kernel $P$ with invariant measure $\mu$ acts on  $L^2_\mu(X)$ by $$Pf(x)=\int_X P(x,dy)f(y).$$
Jensen's inequality implies that the spectrum $\lambda(P)$ of $P$ is contained in the unit disk. If  $P$ is reversible, the corresponding Markov operator is self-adjoint. Thus,  the spectrum is real valued and $\lambda(P)\subseteq [-1,1]$.  Moreover, $P$ does always have  $1$ as  an eigenvalue since $P1=1$. 
For a self-adjoint operator on a Hilbert space $H$ we define the sharp lower $\lamMin{H}{A}$ and upper bound $\lamMax{H}{A}$ on the spectrum of the operator $A$ on the space $H$ by
\begin{equation}\label{eq:MCMCRayleighRest}
\lamMin{H}{A}:=\inf_{f\in H}\frac{\left\langle Af,f\right\rangle }{\left|f\right|^{2}}\text{ and }\lamMax{H}{A}:=\sup_{f\in H}\frac{\left\langle Af,f\right\rangle }{\left|f\right|^{2}}.
\end{equation}
The $L^2_\mu$-spectral gap is then given by the difference between 1 and the spectral radius of  the operator $P$ restricted to the orthogonal complement of the space of constant functions which we denote by $L_{0}^{2}(\mu)$. More precisely, we make the following definition.\\

\begin{definition}
\label{def:L2gap}($L_{\mu}^{2}$-spectral gap) A Markov kernel
${\MHK}$ with invariant measure $\mu$ has an  $L_{\mu}^{2}$-spectral
gap $1-\beta$ if 
\begin{equation}
\beta=\sup_{f\in L_{\mu}^{2}}\frac{\left\Vert P\left(f-\mu(f\right))\right\Vert _{2}}{\left\Vert f-\mu(f)\right\Vert _{2}}=\sup_{f\in L_{0}^{2}(\mu)}\frac{\left\Vert Pf\right\Vert _{2}}{\left\Vert f\right\Vert _{2}}=max \left( -\lamMin{\LOmu}{P},\lamMax{\LOmu}{P}\right) <1.
\end{equation}
\end{definition}
The existence of an $L^2_\mu$-spectral gap therefore requires a lower and an upper bound on the spectrum of $\lambda(P)$, thus on $\lambda_{min}^H(P)$ and $\lambda_{max}^H(P)$ respectively. The problem of obtaining a lower bound can be circumvented by considering 
\begin{equation}\label{eq:lazy}
\tilde{P}_r=rI+(1-r)P  \text{ for } 0 \leq r \leq \frac{1}{2}
\end{equation}
because  $\lambda_{min}^H (\tilde{P}_r) \ge  2r-1$.
More precisely, the corresponding Markov process can then be realised using the following steps:
\begin{itemize}
\item the Markov chain does not make a transition, with probability $r$.
\item the Markov chain makes an independent transition according to $P$ with probability $1-r$.
\end{itemize}
This process is a so-called lazy chain associated with $P$ and goes at least back to \cite{lovasz1998mixing}. It is straightforward to see that 
\[
	\sigma(\tilde{P}_r)\subseteq \left[ r+(1-r)\lamMin{\LOmu}{P}, r+(1-r)\lamMax{\LOmu}{P} \right].
\]
Thus, $\tilde{P}_r$ has a spectral gap if $\lamMax{\LOmu}{P} <1$. Its size can be optimised by choosing  the acceptance probability dependent on $\lamMax{\LOmu}{P} $.  We will refer to 
\begin{equation}\label{eq:upperSpectralGap}
 1-\lamMax{L^2(\posterior)}{P}
\end{equation}
as the upper $L^2_\posterior$-spectral gap.\\ 
\begin{main}
The main result of this article is stated in Theorem \ref{thm:main}. It concerns the spectrum of  the Metropolis-Hastings kernel arising from a proposal kernel $Q$ that is reversible with respect to $\prior$. If $\lamMax{\LOmuO}{Q}<1$ and $L$ in Equation (\ref{eq:target}) is bounded above and away from $0$, then also $\lamMax{\LOmu}{P}<1$. In particular, this implies a lower bound on the  $L^2_\mu$-spectral gaps of the lazy chains $\tilde{P}_r$ associated with $P$.
\end{main}
\vspace{0.5cm}

Our main result stated above is proved by expressing the $L^2_\mu$-spectral gap in terms of the associated Dirichlet form in Section \ref{sec:main}.  The proof is similar to that of the comparison theorem  in \cite{diaconis1993comparision}.  Our strategy for Bayesian inverse problems will be to design proposals that are reversible with respect to the prior and that have an  $L^2_{\mu_0}$-spectral gap. The Metropolis-Hastings algorithm will then perform an accept-reject step  according to the likelihood in order to produce samples from the posterior.

It is also worth mentioning that our main result as stated in Theorem \ref{thm:main} should be viewed in context of our recent results presented in \cite{hairer2011spectral} which are demonstrating that the $L^2$-spectral gap of the preconditioned Crank-Nicolson (pCN) algorithm with respect to the Gaussian reference measure is preserved for the corresponding Metropolis-Hastings kernel.  In the same article we used the Ornstein-Uhlenbeck proposal and assumed that $\densityTarRef$ in Equation (\ref{eq:target}) is log-Lipschitz.  However, no global bounds on $\densityTarRef$ were needed in order to prove the preservation of the  $L^2_\mu$-spectral gap. In this way the main result here can be viewed as an extension to a  much larger class of proposals and reference measures under partially stronger assumptions.  We also would like to mention that a related result has been proved for the Gibbs sampler applied to perturbations of Gaussian measures in \cite{1996AmitGibbs}.  However, it is not clear how it could be generalised to arbitrary reference measures.

\subsection{Non-Gaussian Priors, Uniform Series Priors and RRWM algorithms}\label{sec:overviewNonGaussian} 
We briefly summarise Besov priors and uniform series priors and explain how  Theorem \ref{thm:main} can be used in order to construct efficient proposals for posteriors arising from these kind of priors. In Section \ref{sec:Application} this programme is carried out in detail for the uniform series prior. 

As described in  \cite{stuartchinanotes}, many commonly used priors on function spaces can be written as a series expansion of the form 
\[
\prior=\mathcal{L}(u)=\mathcal{L}\left( \phi_0+\sum_{j=1}^\infty \gamma _j \gamma_j u_j \phi_j \right)
\]
where $\phi$ are elements of a possibly infinite Banach space  $X$, $\gamma_j$ are deterministic constants and $u_j$ are i.i.d. random variables. In case of Gaussian measures this fact follows from the Karhunen-Loeve expansion \cite{gaussianMeasureas,Adler:2007fk} where $u_j$ are i.i.d. $\mathcal{N}(0,1)$. For Besov priors, the $u_j$ are i.i.d. from the probability distribution with density proportional to 
\begin{equation}
density \propto \exp\left(-\frac{1}{2}|x|^q\right) \label{eq:Besov}
\end{equation}
where $q\ge 1$. For uniform series priors, the $u_j$ are i.i.d. from $\mathcal{U}(-1,1)$, the uniform distribution on $[-1,1]$. All these cases are of the form $u_j \overset{\text{i.i.d.}}{\sim} \nu $ for some probability measure $\nu$ on $\mathbb{R}$. A Markov kernel that has an $L^2_\nu$-spectral gap can be used compenentwise. By the tensorisation property of $L^2$-spectral gaps \cite{bakry2006functional,guionnet1801lectures}, the resulting Markov kernel has a spectral gap of the same size. 

There is a continuum of different choices but our intuition is that a proposal $Q(x,dy)$ for peaked measures should have a lot of mass close to $x$. Usually, we consider a collection of proposal kernels parametrised by a step size parameter $\epsilon$ where a smaller $\epsilon$ corresponds to the case that more mass is distributed closer to the current value. For smooth target densities, a small $\epsilon$ leads to a large acceptance rate because the ratio in Equation (\ref{eq:accG})  is then close to 1. However, if the step size is too small, the state space is not explored quickly. The trade off is common for Metropolis-Hastings algorithms, a recommendation for the step size can usually be obtained using diffusion limits, for details consider \cite{2012MattinglyDLimit} and references therein.

 For the uniform distribution such a proposal can be realised using a random walk with symmetric proposal 
\begin{align}
	Q_{\text{RW}}(x,dy)&=q(x-y)dy \nonumber \\
	Q_{\text{RW}}(x,dy)&=\mathcal{L}(x+\xi),\;\text{ where }\xi\sim \tilde{q} \nonumber
\end{align}
and repeatedly reflecting $y$ at the boundaries $-1$ and $1$. The reflection can be represented according to the following function
\begin{eqnarray*}
R(x):=\begin{cases}
y & y\le1\\
2-y & 1<y<3\\
-4+y & 3\le y\le4
\end{cases},\text{ where }y=x\text{ mod }\,4.
\end{eqnarray*}
In this way, we can write the proposal kernel as 
\begin{equation}
Q_\epsilon^{\text{RRWM}}(x,dy)=\mathcal{L}\left(R(x+ \epsilon \xi )\right) \label{eq:urmhPropLaw}
\end{equation}
 where $\xi\sim \tilde{q}$ and by introducing a step size parameter $\epsilon$.
We call the Metropolis-Hastings algorithms based on tensorisations of these proposals Reflection Random Walk Me\-tro\-po\-lis (RRWM) algorithms. These algorithms are revisited in Section \ref{sec:SGuniformSeries}. In particular, we consider the dependence of the $L^2_{\mathcal{U}(-1,1)}$-spectral gap of $Q_\epsilon$ on $\epsilon$ for $\tilde{q}=\mathcal{U}(-1,1)$ and $\tilde{q}=\mathcal{N}(0,1).$ Moreover,  in Section \ref{sec:num} we provide numerical evidence that the RRWM and the IS algorithms are robust with respect to an increase in dimension. These simulations also show that the RRWM algorithm is a substantial improvement over the IS algorithm especially for concentrated measures.

\begin{rem}\label{rem:besov}
For the Besov prior one possible choice is to use the transition kernel of a Metropolis-Hastings algorithm with target measure given by Equation (\ref{eq:Besov}) as building block for the proposal. Under mild conditions, the resulting Markov chains  are geometrically ergodic  
 \cite{Mengersen1996ConvergenceMCMC,Jarner2000341} and have therefore an $L^2_{\nu}$-spectral gap \cite{Roberts2001GeomL2}. In particular, by Theorem 3.2 in  \cite{Mengersen1996ConvergenceMCMC} this is true for the RWM algorithm and arbitrary $q\ge1$.
 \end{rem}

\section{Review of Bayesian Inverse Problems and Me\-tro\-po\-lis-Hastings Algorithms}\label{sec:review}
This section is devoted to giving a brief summary on the relevant material on Bayesian inverse problems and  to giving an introduction on Metropolis-Hastings algorithms on general state spaces.
For more details, we refer the reader to   \cite{MR2652785,stuartchinanotes} and \cite{samplingFirstInfiniteDimensional,brooks2011handbook} respectively.
The main idea of the Bayesian approach is to treat the parameters, the output of the mathematical model and the data as jointly distributed random variables. The randomness of the parameters is introduced artificially to subjectively model the uncertainty based on the a priori knowledge. The distribution of the parameters is called the prior. In the Bayesian framework the conditional probability distribution of the parameters  given the noisy data is called the  posterior. It is an update of the prior using the data and can be viewed as the solution to the inverse problem because it describes the a posteriori uncertainty about the parameters. The posterior is a very important tool because it can be used to
\begin{itemize}
\item  obtain point estimates for the unknown in an inverse problem such as the posterior mean or the MAP estimator which can be related to the Tikhonov regularisation, see \cite{2013dashtiMap};
\item quantify the uncertainty through the posterior variance or the posterior probability of a set in the parameter space.
\end{itemize}
We concentrate on the latter and note that both quantities can be written as posterior expectations. Metropolis-Hastings algorithms are used for that purpose and are reviewed in Section \ref{sec:MCMCGeneral}.

\subsection{Bayesian Inverse Problems}\label{sec:BIP}
We consider a general inverse problem for which the data  is generated by\begin{eqnarray*}
\data & = & \mathcal{G}(\input)+\noise \in Y.
\end{eqnarray*}
Here $\noise$ is the observational noise, $\input\in X$ is the input of the mathematical model, for example the initial condition or  coefficients  for a PDE, and $\mathcal{G}$ is the forward operator, a mapping between the Hilbert spaces $X$ and $Y$. In this setting, the inverse problem is concerned with the reconstruction of the input $\input$  to the model $\mathcal{G}$ given its noisy output, the data $\data$.  
In the Bayesian framework, this is approached by placing a prior probability measure $\prior$   on $\input$ containing all the
a priori information. If, in addition, the forward operator
$\mathcal{G}$ and the distribution of $\noise$ is given, then $\input$ and $\data$ can be treated as jointly varying random variables. Under mild assumptions, there exists a conditional probability measure on $\input$ which is called the posterior, an update of the prior using the data. In contrast to the minimiser of a least squares functional, the posterior is continuous in the data with respect to the total variation and the Hellinger distance. The posterior is also continuous with respect to approximations of the forward model. For the precise statements of these results we refer the reader to the surveys in  \cite{MR2652785} and \cite{stuartchinanotes}. Due to the latter result, it is possible to bound the difference between expectations calculated with respect to the posterior associated with the infinite dimensional and the discretised forward model. In Sections \ref{sec:MCMCGeneral} and \ref{sec:spectral}, we explain how the Metropolis-Hastings algorithm can be used  to approximate expectations  with respect to the posterior associated with the discretised forward model and how the resulting Monte-Carlo error can be bounded.
In order to use a Metropolis-Hastings algorithm, we specify the posterior more explicitly. For finite dimensional distributions given as probability densities Bayes' rule  \cite{1994BernardoBayesianBook} yields 
\begin{equation}
\text{posterior} \propto \text{likelihood} \times \text{prior} \label{eq:brule}.
\end{equation}
 We consider a generalisation of Bayes' rule to infinite dimensions. In this article, we only consider finite dimensional data, that is $Y=\mathbb{R}^N$. However, the results in  \cite{MR2652785} and \cite{stuartchinanotes} allow the data to be infinite dimensional as well. In the case of finite dimensional data, where the observational noise has a Lebesgue density $\rho$, the Bayesian framework can be summarised as follows

\begin{align}\label{eq:bayesInv}\begin{aligned}
\text{Prior}\qquad & \input\sim\mu_{0}\\
\text{Noise}\qquad & \noise\:\text{with pdf }\rho(\eta)\\
\text{Likelihood}\qquad & y|\input\:\text{ r.v. with pdf }\rho(y-\mathcal{G}(\input))\\
 & \densityTarRef(\input)=\rho(y-\mathcal{G}(\input))\\
\text{\text{Posterior}}\qquad & \frac{d\mu^{y}}{d\mu_{0}}(\input)\propto\densityTarRef(\input).
\end{aligned}\end{align}

Subsequently, we drop the dependence on the data $y$ and hope that this does not cause any confusion for the reader. The important point to note here is that  the Equations (\ref{eq:brule}) and (\ref{eq:bayesInv}) are of the same form as the general target measure for the Metropolis-Hastings algorithm in Equation (\ref{eq:target}) which will be reviewed in the subsequent section.

\subsection{The Metropolis-Hastings Algorithm on General State Spaces}

\label{sec:MCMCGeneral}

The common idea  of MCMC algorithms is to create a Markov
chain with a prescribed invariant measure, called the target measure $\mu$. Samples of this Markov chain  under (mild) conditions satisfy a law of large numbers and can thus be used to approximate expectations with respect to the target measure.  Under stronger conditions it is possible to control the resulting random error using a central limit theorem (CLT) or to establish bounds on the mean square error. We follow \cite{samplingFirstInfiniteDimensional} in introducing Metropolis-Hastings algorithms on general state spaces as this is needed for Bayesian inverse problems.

The idea of the Metropolis-Hastings kernel is to add an independent accept-reject step to a proposal Markov kernel
$\proposal(x,dy)$ in order to produce a Markov kernel $P(x,dy)$ with $\posterior$ as an invariant measure, that is
\[
\posterior P = \int_{X}\posterior(dx)P(x,dy)=\posterior(dy).
\]
Subsequently, we discuss a choice of the acceptance probability such that $\posterior$ is invariant for $P$. Thereafter we consider  the reversibility of both the proposal and the Metropolis-Hastings kernel.  This property is important because it yields error bounds on the sample average in combination with an   $L^{2}$-spectral gap (c.f. Section \ref{sec:spectral}). We close this section by reviewing convergence results for Metropolis-Hastings algorithms. 

The Metropolis-Hastings algorithm accepts a  move from $x$
to $y$ proposed by the kernel $Q(x,dy)$ with acceptance probability $\alpha(x,y)$. Thus, the algorithm takes the following form:\\ 

\noindent \textbf{Algorithm}
Initialise $X_{0}$. For $i\text{=0,\ensuremath{\dots},n}$ do:

\noindent Generate $Y\sim Q(X_{i},\cdot)$, $U\sim\mathcal{U}(0,1)$ independently and set
\[
X_{i+1}=\begin{cases}
Y & \text{if  }\alpha(X_{i},Y)>U\\
X_{i} & \text{otherwise}
\end{cases}.
\]

The transition kernel  $P(x,dy)$ associated with the Metropolis-Hastings algorithm can be written as 
\[
P(x,dy)=\alpha(x,y)Q(x,dy)+\delta_{x}(dy)\left(1-\int_{E}Q(x,dy)\alpha(x,y)\right).
\]

If the Radon-Nikodym derivative $\frac{d\posterior(dy)Q(y,dx)}{d\posterior(dx)Q(x,dy)}$
exists, then $\posterior$ is invariant for  $P$ for the choice 
\begin{equation}
\alpha(x,y):  =  \min\left(1,\frac{d\posterior(dy)Q(y,dx)}{d\posterior(dx)Q(x,dy)}\right). \label{eq:accG}
\end{equation}

In fact,  $\posterior$  is not only invariant for the Metropolis-Hastings kernel $P$ but the kernel $P$ is also reversible with respect to $\posterior$   \cite{samplingFirstInfiniteDimensional} as defined subsequently.\\

\begin{definition}
A Markov kernel $P$ is reversible with respect to a measure $\posterior$
if 
\[
\posterior(dx)P(x,dy)=\posterior(dy)P(y,dx).
\]
\end{definition}
If the proposal $Q$ is reversible with respect to a reference measure $\prior$ such that $\frac{d\posterior}{d\prior}\propto L$, then Equation (\ref{eq:accG}) reduces  to 
\begin{equation}\label{eq:accRev}
\alpha(x,y)=  \min\left(1,\frac{d\;CL(y)\prior(dy)Q(y,dx)}{d\; CL(x)\prior(dx)Q(x,dy)}\right)=\frac{\densityTarRef(y)}{\densityTarRef(x)} \wedge 1.
\end{equation}

The reference measure $\prior$ does not have to be a probability measure. In fact, in finite dimensions $\prior$ is often chosen to be the Lebesgue measure. The Markov kernel  associated with a symmetric random walks preserve the Lebesgue measure and therefore give rise to the simple acceptance ratio given above. The Random Walk Metropolis (RWM) algorithm is the well-known special case, if $Q(x,dy)=\mathcal{N}(x,C)(dy)$ for some positive definite covariance matrix $C$.

The problem in designing (efficient) proposals on function spaces is that the Radon-Nikodym derivative in Equation (\ref{eq:accG}) is often not well-defined. This follows from different almost sure properties  of $\mu(dx)$ and $\int Q(y,dx) d\mu(y)$ such as quadratic variation or regularity properties. The simplest proposal  which preserves these properties is to pick $\prior$ with the same almost sure properties and to use the proposal kernel
$$Q(x,dy)=\prior (dy).$$
 The resulting algorithm is called independence  sampler (IS) because the proposal does not depend on the current state $x$.
For Bayesian inverse problems it is natural to design proposals that are reversible for the prior because this preserves the almost sure properties and leads to a simple acceptance rule only involving the likelihood  (see also Equation (\ref{eq:accRev})). 

In general, Metropolis-Hastings algorithms are run in order
to approximate
\begin{equation}
\int\posterior(dx)f(x)\quad \text{ by }\quad S_{n,n_{0}}(f)=\frac{1}{n}\sum_{i=n_{0}}^{n_{0}+n}f(X_{i})\label{eq:burnin}
\end{equation}
where  $n_0$ is the burn-in corresponding to throwing away the first $n_0$ samples in order to reduce the bias. The resulting error takes the form
\[
  e_{n,n_0}(f)=\mu(f)-S_{n,n_{0}}(f).
\]
The complexity of Metropolis-Hastings algorithms can be quantified as 
\[
\text{number of necessary steps}\times\text{cost of one step.}
\]
The cost of one step is usually easy to quantify and depends on the problem at hand. The number of necessary steps depends on the prescribed error level (for example fixed width (asymptotic) confidence interval see \cite{2006JonesFixedWidth}, \cite{2011KrysConfidence} and \cite{explicitbdd})  and the convergence properties of the Markov chain. 

Geometric ergodicity is the most popular type of convergence used in the literature. An excellent review of this is given in \cite{roberts2004general}.  However, this approach is not well-adapted  to Bayesian inverse problems since the $\psi$-irreducibility property often fails to hold for the algorithm on a function space. By
$\psi$-irreducible we mean the existence of a positive measure $\psi$ such that 
\[
\psi(A)>0 \Rightarrow P(x,A)>0 \quad \forall x.
\]
This property often fails for infinite dimensional problems because the transition probabilities tend to be mutually singular for different starting points (this is even the case for a Gaussian random walk).  One exception is the IS algorithm  \cite{2013HoangComplexityMCMC}. Therefore the notion of $L^2_\mu$-spectral gaps, as introduced in Section \ref{sec:summaryMain}, is much better adapted to this situation and is used in the sequel.

Having introduced Bayesian inverse problems and Metropolis-Hastings algorithms on general state spaces, we are now in the position to formulate and prove the main result of this article.

\section{$L^2$-Spectral Gaps for Metropolis-Hastings algorithms}\label{sec:spectral}
Much of the theory on Metropolis-Hastings algorithms is aimed at establishing (asymptotic) bounds on the Monte Carlo error defined by
$$e_{n,n_0}(f):=\mu(f)-S_{n,n_{0}}(f)$$
where $S_{n,n_{0}}(f)$ is given by Equation (\ref{eq:burnin}). In this section, we survey the appropriate theorems from the literature that allow us to bound this error in terms of an  $L^2$-spectral gap justifying the importance of our main theorem. We start by introducing Dirichlet forms which relate to the second largest eigenvalue $\lamMax{L}{\LOmu}{P}$ of $P$ and therefore allow us to bound $\lamMax{L^2_0(\posterior)}{P}$. In this way we obtain a bound on the $L^2$-spectral gap of a lazy version of the Metropolis-Hastings chain in terms of 
$\lamMax{L^2_\prior}{Q}$ for the corresponding proposal chain and the bound on $L$.

\subsection{The Implications of an $L^{2}$-Spectral Gap}\label{sec:impl}
The two main implications of an $L^2_\target$-spectral gap are a CLT for $S_{n,n_{0}}(f)$ providing an asymptotic bound on the error of size $\mathcal{O} (\frac{1}{\sqrt{n}})$ and a non-asymptotic bound on the mean square error. The latter yields non-asymptotic confidence intervals using Chebyshev's inequality. 

In the  following we present the precise statement of the  CLT due to Kipnis and Varadhan  \cite{kipnis1986central}. The following version is taken from   \cite{Latuszynskiclt}.\\

\begin{proposition}(Kipnis-Varadhan)
\label{kipnisvara-clt} Consider an ergodic Markov chain with transition
operator ${\MHK}$ which is reversible with respect to a probability measure
$\posterior$ and which has an $L_{\mu}^{2}$-spectral gap $1-\beta$. For $f\in L^{2}$ we define 
\[
\sigma_{f,P}^{2}=\left\langle \frac{1+P}{1-P}f,f\right\rangle .
\]
 Then for $X_{0}\sim\mu$ the expression $\sqrt{n}(S_{n}-\mu(f))$
converges weakly to $\mathcal{N}(0,\sigma_{f,P}^{2})$. Moreover,
the following inequality holds
\[
\sigma_{f,P}^{2}\leq \frac{2\mu((f^{2}-\mu(f)^2))}{(1-\beta)}<\infty.
\]
\end{proposition}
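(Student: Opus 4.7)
The plan is to invoke the spectral gap in order to solve the Poisson equation, then carry out the classical martingale decomposition and apply a martingale CLT. Throughout we may subtract $\posterior(f)$ so assume $\posterior(f)=0$; the quantities of interest are invariant under this shift. Since $\MHK$ is $\posterior$-reversible, it is a bounded self-adjoint operator on $L^{2}_{\posterior}$ and, by the spectral gap assumption, its restriction to the orthogonal complement of the constants has operator norm at most $\beta<1$. In particular, on the mean-zero subspace, the operator $I-\MHK$ is invertible with
\[
\left\Vert (I-\MHK)^{-1}\right\Vert \leq \frac{1}{1-\beta},
\]
so the Poisson equation $(I-\MHK)g=f$ has a unique mean-zero solution $g\in L^{2}_{\posterior}$, given concretely by the Neumann series $g=\sum_{k\geq 0}\MHK^{k}f$ (absolutely convergent because $\|\MHK^{k}f\|_{2}\leq\beta^{k}\|f\|_{2}$).

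Next I would set up the martingale decomposition. Define
\[
M_{n}=\sum_{i=0}^{n-1}\bigl(g(X_{i+1})-\MHK g(X_{i})\bigr).
\]
Using $\MHK g = g-f$, telescoping immediately yields
\[
\sum_{i=0}^{n-1}f(X_{i}) = M_{n}+g(X_{0})-g(X_{n}).
\]
Under the stationary law $X_{0}\sim\posterior$, the sequence $M_{n}$ is a square-integrable martingale with stationary increments, and the boundary terms $g(X_{0})-g(X_{n})$ are bounded in $L^{2}$, hence are $o(\sqrt{n})$ in probability. Therefore $\sqrt{n}\,S_{n}$ and $M_{n}/\sqrt{n}$ have the same limiting distribution. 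Applying the Billingsley/Brown CLT for stationary square-integrable martingale differences (the ergodicity of $\MHK$ gives the required convergence of conditional variances to a constant), one obtains $M_{n}/\sqrt{n}\Rightarrow\mathcal{N}(0,\sigma^{2})$ with
\[
\sigma^{2}=\mathbb{E}_{\posterior}\bigl[(g(X_{1})-\MHK g(X_{0}))^{2}\bigr]=\langle g,g\rangle-\langle \MHK g,\MHK g\rangle.
\]

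To match the stated form of $\sigma^{2}_{f,\MHK}$, I would rewrite $\langle g,g\rangle-\langle \MHK g,\MHK g\rangle=\langle g,(I-\MHK^{2})g\rangle=\langle g,(I+\MHK)(I-\MHK)g\rangle=\langle (I-\MHK)^{-1}f,(I+\MHK)f\rangle=\bigl\langle \tfrac{1+\MHK}{1-\MHK}f,f\bigr\rangle$, where the last two equalities use $(I-\MHK)g=f$ and the self-adjointness of $\MHK$. For the upper bound, I would use the spectral theorem: let $\nu_{f}$ be the scalar spectral measure of $f$ associated with $\MHK$. Since $f$ is mean-zero, $\nu_{f}$ is supported in $[-\beta,\beta]$, and
\[
\sigma^{2}_{f,\MHK}=\int_{[-\beta,\beta]}\frac{1+\lambda}{1-\lambda}\,d\nu_{f}(\lambda)\leq\frac{1+\beta}{1-\beta}\,\nu_{f}([-\beta,\beta])\leq\frac{2}{1-\beta}\,\|f\|_{2}^{2},
\]
which, recalling that $\|f\|_{2}^{2}=\posterior(f^{2})-\posterior(f)^{2}$ for the originally un-centred $f$, is the claimed bound.

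I expect the main technical friction to lie in two places. First, one must verify the hypotheses of the martingale CLT cleanly — in particular, convergence of the conditional variances $n^{-1}\sum_{i}\mathbb{E}[(g(X_{i+1})-\MHK g(X_{i}))^{2}\mid \mathcal{F}_{i}]$ to $\sigma^{2}$ in probability — which is where ergodicity (guaranteed by the spectral gap) is used. Second, one must argue that the CLT obtained for stationary initial condition $X_{0}\sim\posterior$ actually delivers the stated weak convergence; for the statement as given this is automatic since the hypothesis is $X_{0}\sim\posterior$, but if an extension to arbitrary starting distributions is desired, one controls the difference via the spectral gap (the distribution of $X_{n_{0}}$ approaches $\posterior$ geometrically in an appropriate norm). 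Neither step is delicate given the spectral-gap hypothesis, which is precisely what makes the argument work so cleanly in this regime relative to the original Kipnis--Varadhan treatment.
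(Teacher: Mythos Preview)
Your argument is correct. The paper, however, does not prove this proposition at all: it is stated as a known result, attributed to Kipnis and Varadhan and quoted in the form given in \cite{Latuszynskiclt}, with no proof supplied. So there is nothing in the paper to compare your proof against.

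That said, your route is the standard one when a spectral gap is available, and it is in fact simpler than the original Kipnis--Varadhan argument: because $(I-\MHK)^{-1}$ exists as a bounded operator on the mean-zero subspace, you can solve the Poisson equation exactly and run the Gordin-type martingale decomposition directly, whereas the general Kipnis--Varadhan theorem must work with resolvent approximations $g_{\lambda}=(\lambda I + I-\MHK)^{-1}f$ and pass to the limit $\lambda\downarrow 0$, since $\sigma^{2}_{f,\MHK}<\infty$ does not by itself guarantee $f\in\operatorname{Ran}(I-\MHK)$. Your identification of the limiting variance and the spectral bound are both correct; the only minor comment is that ergodicity is already implied by the spectral gap (the eigenvalue $1$ is simple), so you need not list it as a separate ingredient when invoking the martingale CLT.
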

The following non-asymptotic bounds on the mean square error are due to Rudolf \cite{explicitbdd}. \\

\begin{proposition}\label{prop:rudolf}(Rudolf)
Suppose that we have a Markov chain with Markov operator ${\MHK}$
having an $L_{\mu}^{2}$-spectral gap $1-\beta$. For $p\in(2,\infty]$
let $n_{0}(p)$ be chosen such that
\begin{equation}
n_{0}(p)\ge\frac{1}{\log\left(\beta^{-1}\right)}\begin{cases}
\frac{p}{2(p-2)}\log\left(\frac{32p}{p-2}\right)\left\Vert \frac{d\nu}{d\mu}-1\right\Vert _{L^{\frac{p}{p-2}}(\mu)} & p\in(2,4)\\
\log(64)\left\Vert \frac{d\nu}{d\mu}-1\right\Vert _{L^{\frac{p}{p-2}}(\mu)} & p\in[4,\infty].
\end{cases}\label{eq:burninsteps}
\end{equation}
Then for $S_{n,n_0}$ as in Equation (\ref{eq:burninsteps}) and $f\in L^2_\posterior$
\begin{equation}\label{eq:rudolf}
\underset{\left\Vert f\right\Vert _{2}\leq1}{\sup}\mathbb{E}\left[\left(\posterior(f))-\frac{1}{n}\sum_{i=n_{0}(p)}^{n_{0}(p)+n}f(X_{i})\right)^{2}\right]\leq\frac{2}{n(1-\beta)}+\frac{2}{n^{2}(1-\beta)^{2}}.
\end{equation}
\end{proposition}
\begin{rem}
The burn in $n_0(p)$ is chosen in terms of an a priori bound on $\frac{d\nu}{d\mu}-1$ in $L^{\frac{p}{p-2}}(\mu)$. The bound in Equation (\ref{eq:rudolf}) does not involve this quantity. For details, we refer the reader to \cite{explicitbdd}.
\end{rem}\\

If a Metropolis-Hastings algorithm has an  $L^2_\posterior$-spectral gap, then the two results above can be used to derive asymptotic and non-asymptotic confidence intervals and levels for  the Monte-Carlo error $e_{n,n_0}(f)$. The CLT only provides asymptotic confidence intervals. In contrast,  bounds on the MSE imply non-asymptotic confidence intervals using Chebyshev's inequality. Moreover, the size of the confidence intervals can be shrunk using the 'median trick' which estimates $\mathbb{E}f$  through the median of multiple shorter runs leading to exponential tight bounds. This trick has been developed for MCMC algorithms in \cite{niemiro2009MEDIAN}. Another good reference to mention is given by \cite{2011KrysConfidence}.

\subsection{Characterisation of the second largest eigenvalue of $P$ on $L^2(\mu)$}

The second largest eigenvalue $P$ on $L^2(\mu)$ is given by the largest eigenvalue  $\lamMax{\LOmu}{P}$ of $P$ on $L_{0}^{2}(\mu)$ can be obtained from the smallest eigenvalue $\lambda_{\text{min}}^{L_{0}^{2}(\mu)}(I-P)$
of $I-P$ on $L_{0}^{2}(\mu)$.

This can be characterised as follows
\begin{equation}
1-\lambda_{\text{max}}^{L_{0}^{2}(\mu)}(P)=\inf_{f\in L_{0}^{2}(\mu)}\frac{\left\langle (I-P)f,f\right\rangle }{\left|f\right|^{2}}=\inf_{f\in L^{2}(\mu)}\frac{\left\langle (I-P)\Pi f,\Pi f\right\rangle }{\left|\Pi f\right|^{2}}\label{eq:upperSG}
\end{equation}
where $\Pi:L_{0}^{2}(\mu)\rightarrow L^{2}(\mu)$ is the orthogonal
projection onto $L_{0}^{2}(\mu)$ given by
\[
\Pi f=f-\mu(f).
\]
The denominator can be rewritten as 
\begin{eqnarray}
\left|\Pi f\right|^{2} & = & Var_{\mu}(f)=\int\left(f-\mu(f)\right)^{2}d\mu\nonumber \\
 & = & \int f^{2}d\mu-\mu(f)^{2}=\frac{1}{2}\int\mu(dx)\mu(dy)\left(f(x)-f(y)\right)^{2}.\label{eq:variance}
\end{eqnarray}
The nominator in Equation (\ref{eq:upperSG}) can be rewritten as
\begin{eqnarray*}
\left\langle (I-P)(f-\mu(f)),f-\mu(f)\right\rangle  & = & \left\langle (I-P)f,f-\mu(f)\right\rangle =\left\langle (I-P)f,f\right\rangle \\
 & = & \int\mu(dx)P(x,dy)\left(f(x)^{2}-f(x)f(y)\right)dy\\
 & = & \frac{1}{2}\int\mu(dx)P(x,dy)\left(f(x)-f(y)\right)^{2}dy=:\mathcal{E}_{\mu}^{P}(f,f).
\end{eqnarray*}
The bilinear form $\mathcal{E}(f,f)$ is the Dirichlet form associated
with the Markov chain given through the transition kernel $P$. There
is a large literature on studying Markov processes through their Dirichlet
form. We refer the reader to \cite{Schmuland1999Byron} for a short
survey on time-continuous Markov processes, to \cite{Fukushima2011DirichletForms}
for a full account of the theory and to \cite{levin2009markov} for
a review on discrete Markov chains.  In the subsequent derivation of our main theorem, we only use the characterisation of the $L^2_\posterior$-spectral gap
\begin{equation}
1-\lambda_{\text{max}}^{L_{0}^{2}(\mu)}=\inf_{f\in L^{2}(\mu)}\frac{\mathcal{E}_{\mu}^{P}(f,f)}{\text{Var}(f)} \label{eq:SGdirichlet}
\end{equation}
 that we have just derived.

\subsection{Main Result}\label{sec:main}

The following theorem provides an explicit lower bound on $\lambda_{\text{max}}^{L_{0}^{2}(\mu)}(P)$ of the  Metropolis-Hastings chain in terms of the eigenvalue $\lambda_{\text{max}}^{L_{0}^{2}(\mu_{0})}(Q)$ of the proposal chain and the  bounds on the density of the posterior with respect to the prior. It is close in spirit to the comparison theorem
for discrete Markov chains obtained in \cite{diaconis1993comparision}.\\

\begin{theorem}\label{thm:main}
Suppose that the proposal kernel $Q$ satisfies a lower bound on the
upper $L_{\mu_{0}}^{2}$-spectral gap $1-\lambda_{\text{max}}^{L_{0}^{2}(\mu)}(Q)>0$
and assume that the target measure takes the form
\[
\mu=\frac{L}{Z}\mu_{0}.
\]
Then the upper $L_{\mu}^{2}$-spectral gap satisfies 
\[
\left(1-\lambda_{\text{max}}^{L_{0}^{2}(\mu_{0})}(Q)\right)\frac{L^{\star3}}{L_{\star}^{3}}\ge1-\lambda_{\text{max}}^{L_{0}^{2}(\mu)}(P)\ge \frac{L_{\star}^{4}}{L^{\star4}}\left(1-\lambda_{\text{max}}^{L_{0}^{2}(\mu_{0})}(Q)\right)
\]
where $L_{\star}:=\inf L\leq L\leq\sup L=L^{\star}.$ In particular,
the lazy version $\tilde{P}_r$, given in Equation (\ref{eq:lazy}), has an $L_{\mu}^{2}$-spectral gap $1-\beta_{\text{lazy}}$
satisfying 
\[
1-\beta_{\text{lazy}}\ge \min \left(r+(1-r)(1+\lamMin{L^2_0 (\posterior))}{P}), r+(1-r) \frac{L_{\star}^{4}}{L^{\star4}}\left(1-\lambda_{\text{max}}^{L_{0}^{2}(\mu_{0})}(Q)\right)  \right).
\]
\end{theorem}
\begin{proof}
From Equation (\ref{eq:variance}) it follows that
\[
\frac{L_{\star}^{2}}{Z^{2}}\text{Var}_{\mu}(f)\leq\text{Var}_{\mu_{0}}(f)\leq\frac{L^{\star2}}{Z^{2}}\text{Var}_{\mu}(f).
\]
Similarly, we notice that
\begin{eqnarray*}
\mathcal{E}_{\mu}^{P}(f,f) & = & \frac{1}{2}\int\mu_{0}(dx)Q(x,dy)\frac{L}{Z}\alpha(x,y)\left(f(x)-f(y)\right)^{2}\\
 & \ge & \frac{L_{\star}}{Z}\alpha_{\star}\frac{1}{2}\int\mu_{0}(dx)Q(x,dy)\left(f(x)-f(y)\right)^{2}\\
 & \ge & \frac{L_{\star}^{2}}{ZL^{\star}}\left(1-\lambda_{\text{max}}^{L_{0}^{2}(\mu_{0})}(Q)\right)\text{Var}_{\mu_{0}}(f)\\
 & \ge & \frac{L_{\star}^{4}}{Z^{3}L^{\star}}\left(1-\lambda_{\text{max}}^{L_{0}^{2}(\mu_{0})}(Q)\right)\text{Var}_{\mu}(f)\\
 & \ge & \frac{L_{\star}^{4}}{L^{\star4}}\left(1-\lambda_{\text{max}}^{L_{0}^{2}(\mu_{0})}(Q)\right)\text{Var}_{\mu}(f).
\end{eqnarray*}
Thus, we can conclude that 
\[ 1-\lambda_{\text{max}}^{L_{0}^{2}(\mu)}(P)=\inf_{f\in L^{2}(\mu)}\frac{\mathcal{E}_{\mu}^{P}(f,f)}{\text{Var}(f)}\ge\frac{L_{\star}^{4}}{L^{\star4}}\left(1-\lambda_{\text{max}}^{L_{0}^{2}(\mu_{0})}(Q)\right). \]
The other inequality is obtained in the following way

\begin{eqnarray*}
\mathcal{E}_{\mu}^{Q}(f,f) & = & \frac{1}{2}\int\mu_{0}(dx)Q(x,dy)\frac{L}{Z}\alpha(x,y)\left(f(x)-f(y)\right)^{2}\\
 & \ge & \frac{L_{\star}}{Z}\frac{1}{2}\int\mu(dx)P(x,dy)\left(f(x)-f(y)\right)^{2}\\
 & \ge & \frac{L_{\star}}{Z}\left(1-\lambda_{\text{max}}^{L_{0}^{2}(\mu)}(P)\right)\text{Var}_{\mu}(f)\\
 & \ge & \frac{L_{\star}^{3}}{Z^{3}}\left(1-\lambda_{\text{max}}^{L_{0}^{2}(\mu)}(P)\right)\text{Var}_{\mu_{0}}(f).
\end{eqnarray*}
The result for the lazy chain follows from the discussion at the beginning of this section. \end{proof}\\

\begin{rem}
 Using a lazy version of a Markov chain results in a worse asymptotic performance. Corollary 1 from  \cite{Latuszynskiclt} states that the asymptotic variance of $S_n(f)$ for the Markov chain associated with $\tilde{P}_r$ is given by
\[
\sigma^2_{f,\tilde{P}_r}=\frac{1}{1-r}\sigma^2_{f,P}+\frac{r}{1-r}\sigma^2_f\ge \sigma^2_{f,P}.
\]
However, the proof of Proposition \ref{prop:rudolf} in \cite{explicitbdd} crucially requires a lower bound on the $L^2$-spectral gap to obtain a non-asymptotic bound on the mean square error.\\
\end{rem}

This result highlights the insight that the reference measure is crucial for designing efficient sampling algorithms on function spaces. A typical example would be the use of a Markov chain that has an $L^2_\prior$-spectral gap where $\prior$ is the prior of a Bayesian problem. If the likelihood is bounded, then the lazy version of the resulting Metropolis-Hastings algorithm with this chain as the proposal has an $L_\posterior^2$-spectral gap with $\posterior$ being the posterior. However, the result is not limited to this situation because $\prior$ and $\posterior$ can be arbitrary measures such that the density of $\posterior$ with respect to $\prior$ is bounded.\\

\begin{rem}
\label{rem:independenceSampler1}
For a fixed target measure  a larger $L^2_{\mu_0}$-spectral gap of $\proposal$ implies a larger lower bound on  the $L^2_\posterior$-spectral gap of $\MHK$. In particular the largest lower bound is achieved for the IS algorithm. It is important to note that this does not imply that this choice leads to the largest spectral gap for $\MHK$. In fact, the simulations in Section \ref{sec:num} as demonstrated in Figure \ref{fig:autocorr} and \ref{fig:autocorr2} suggest otherwise.\\
\end{rem}
\begin{rem}
\label{rem:independenceSampler2}
The results obtained in \cite{1996AmitGibbs} for the Gibbs sampler applied to a perturbation of a Gaussian measure suggest that the sharper inequalities
\[
\left(\frac{\densityTarRef_{\star}}{\densityTarRef^{\star}}\right){(1-\beta_{\text{prop}})}\leq1-\beta\leq \left(\frac{\densityTarRef^{\star}}{\densityTarRef_{\star}}\right){(1-\beta_{\text{prop}})}
\]
might hold. This seems to be an interesting question for further investigation. 

\end{rem}

\section{Application to an Elliptic Inverse Problem\label{sec:Application}}
The theoretical result of the previous section was motivated by studying the reconstruction of the diffusion coefficient $\input$ given by noisy observations of the pressure $\pressure$.  
In Section \ref{sec:forward} we set up the forward problem  and review the literature on  the resulting inverse problem  focusing on the Bayesian approach.  In Section \ref{sec:appl.bayesian} we specify the Bayesian inverse problem by setting up a uniform series prior and specify the noise to be additive and Gaussian. Moreover, we show  that the resulting posterior has a bounded density with respect to this prior. 
The remaining part of the section is devoted to constructing appropriate proposal kernels and proving a lower bound on their $L^2_\prior$-spectral gaps revisiting the ideas from Section \ref{sec:overviewNonGaussian}. Thus, our main theorem  implies a lower bound on the  $L^2_\posterior$- spectral gaps of the corresponding Metropolis-Hastings algorithms.

\subsection{The Underlying PDE and Well-Definedness of the Forward Model\label{sec:forward}}
The forward problem is based  on  the relation between the pressure $\pressure$ and the diffusion coefficient	$\input$  modelled  by the following elliptic PDE with Dirichlet boundary conditions
\begin{align}\label{eq:postConEllipticForward}
\begin{aligned}
\begin{cases} -\nabla\cdot(\input\nabla\pressure)&=g(x)\;\hspace{0.2cm}\text{in }D\\
\quad\pressure&=0\qquad\text{ on }\partial D \end{cases}
\end{aligned}
\end{align}
where $D$ is a bounded domain in $\mathbb{R}^{d}$ and $\pressure$ and $\input$ are scalar functions on $D$. We assume that $a^{\star}\ge a(x)\ge a_\star >0$ for  almost every $x\in D$.  The subset of $L^\infty(D)$-functions that satisfy this condition is denoted by
 \[L^\infty_{+}:=\left\{u\in L^\infty\Big\vert\ \underset{D}{\text{ess inf }} u>0\right\}.\]
If, additionally, $g$ is  in the Sobolev space $H^{-1}$, then  the solution operator $\pressure(x;\input):L^\infty_+\rightarrow H^1$,
 mapping to the unique weak solution of the PDE stated in Equation (\ref{eq:postConEllipticForward}), is well-defined (for details we refer the reader to \cite{stuartchinanotes}). We suppose that the forward operator $\mathcal{G}$, giving rise to the data, is based on the solution operator denoted by $\pressure(\cdot;\input)$ and the observation operator $\mathcal{O}$ as follows
 \begin{equation}
\mathcal{G}(\input)=\mathcal{O}\left(\pressure(\cdot;\input)\right) \label{eq:assForwardSplitting}
\end{equation}
Additionally, we suppose that it is equal to  $\mathcal{O}=\left (l_1,\dots,l_N\right)$ with  $l_i\in H^{-1}$.

The inverse problem associated with the above forward problem is well-known and it is particularly relevant in oil reservoir simulations and the modelling of groundwater flow, see for example  \cite{mclaughlin1996reassessment}. A survey on classical least squares approaches to this inverse problem can be found in \cite{1995KunischNumericalEIP} for which error estimates have been obtained recently in \cite{2010WangErrorEIPReg}.  A rigorous Bayesian formulation of this inverse problem with log-Gaussian priors and Besov priors is given in  \cite{UncertaintyElliptic} and \cite{Con3} respectively, both are reviewed in \cite{stuartchinanotes}. There is also an extensive literature in the uncertainty quantification community studying how uncertainty propagates through the forward model. This can be investigated by considering different realisations of the input.  This approach can be combined with the finite element  \cite{ghanem2003stochastic} and Galerkin methods \cite{babuska2004galerkin} used to approximate the underlying equation. For the elliptic inverse problem under consideration, this has been studied in \cite{2010SchwabEllipticUQ}. In fact, it can be more efficient to use generalised Polynomial Chaos (gPC) \cite{schwab2011sparseUQelliptic} instead of Monte Carlo methods. Recently, gPC methods have also been applied to the elliptic inverse problem considered in \cite{Con4,2013HoangComplexityMCMC}. Since gPC often suffers from a large constant and has only been developed for a few inverse problems, it is important to construct efficient samplers tailored to the prior and likelihood at hand. Moreover, we would like to mention that it is also possible to speed up MCMC algorithms using the multi level approach. The expectation of interest is written as difference corresponding to a finer and finer discretisation so that more MCMC samples are used for coarser discretisations \cite{2013HoangComplexityMCMC}. 

\subsection{A Bayesian Approach\label{sec:appl.bayesian}}
We set up the Bayesian inverse problem by specifying the prior and the distribution of the observational noise. Morevoer, we derive a bound on the posterior density which allows us to use Theorem \ref{thm:main} at the end of Section \ref{sec:SGuniformSeries}.
  Following \cite{2013HoangComplexityMCMC} and \cite{Con4}, we choose a prior on the coefficients $(u_1,\dots,u_J)$  for $J\in \mathbb{N}\cup \{\infty\}$ giving rise to the diffusion coefficient
\begin{eqnarray}
\input(u)(x) & = & \bar{\input}(x)+\sum_{j\in\mathbb{J}}\gamma_{j}u_{j}\psi_{j}(x)\label{eq:priorExpansion}
\end{eqnarray}
\noindent where $\norm{\psi_i}_{L^\infty}=1$.  We suppose that $u_i \overset{\text{i.i.d.}}{\sim} \mathcal{U}(-1,1)$ which corresponds to a prior given by 
\[ \prior^J = \bigotimes_{j=1}^J  \mathcal{U}(-1,1.)\]
Additionally, we assume that the weights $\gamma_i$   are such that $\inf a(x)(x)$ is bounded away from $0$ uniformly in $J$  implying that the solution operator $p$ is well-defined for $\prior$-almost every $a(u)$. 
We would like to note that similar probability measures have been studied for the propagation of uncertainty in \cite{2010SchwabEllipticUQ}.

We suppose that the data is given by 
\[
\data  =  \mathcal{G}(\input(u))+\noise
\]
where $\noise \sim \mathcal{N}(0,\Gamma)$. The well-definedness of the corresponding posterior for $J\in \mathbb{N} \cup \{\infty\}$ has been proven in  \cite{Con4} and \cite{stuartchinanotes}. It takes the form
\[
	\frac{d\posterior}{d\prior}\propto \exp\left( -\frac{1}{2} \norm{\data -\mathcal{G}(a)}^2_\Gamma \right).
\]
We also know that 
\[
\left\Vert \mathcal{G}(a)\right\Vert _{\Gamma}\le\left\Vert \Gamma\right\Vert _{2}N\max_{i}\left\Vert l_{i}\right\Vert _{H^{-1}} \sup_{-1\le a_i \le 1}  \left\Vert p(a)\right\Vert _{H^{1}}\leq C\left\Vert \Gamma\right\Vert _{2}N\max_{i}\left\Vert l_{i}\right\Vert _{H^{-1}}a_{\star}^{-1}
\]
\noindent where $a_\star:=\underset{D}{\text{ess inf }}a$.
Note that $C$  depends on $N$ (see Equation (\ref{eq:assForwardSplitting})) but can be chosen uniformly in $J$. This gives rise to the following upper and lower bounds on the likelihood
\begin{align*}
L^{\star} & =1\\
L_{\star} & =\exp\left(-2C^{2}\left\Vert \Gamma^-1\right\Vert _{2}^{1}N^{2}\left(\max_{i}\left\Vert l_{i}\right\Vert _{H^{-1}}\right)^{2}a_{\star}^{-2}\right).
\end{align*}
These bounds are crucial because they allow us to use Theorem \ref{thm:main} in the next section.

\subsection{Spectral Gaps for the Prior and the Posterior}\label{sec:SGuniformSeries}

In order to apply our main result to the setting of this Bayesian inverse problem, we have to choose a proposal kernel $Q$ that is reversible and has an $L^{2}_{\prior}$-spectral gap with respect to  $\prior=\mathcal{U}{(-1,1)}^J$. In the following we work out the details of constructing a proposal for uniform series priors revisiting the ideas presented in Section \ref{sec:overviewNonGaussian}. As described in Remark \ref{rem:besov}, these ideas can also be generalised to Besov priors.

Given any kernel that has an $L^2_{\mathcal{U}{(-1,1)}}$-spectral gap we may apply  the tensorisation property of $L^2$-spectral gaps \cite{bakry2006functional,guionnet1801lectures} in order to conclude that  this application to each component yields  a kernel with the same size spectral gap for $\mathcal{U}{(-1,1)}^J$. Whereas we construct the one dimensional proposal distributions explicitly below, it is worth pointing out that it is possible to obtain an appropriate one-dimensional proposal using the Metropolis-Hastings kernel for $\mathcal{U}{(-1,1)}$ with a one-dimensional proposal distribution. Thus, the resulting Markov kernel is uniformly ergodic under mild assumptions \cite{roberts2004general} implying an $L^2_{\mathcal{U}{(-1,1)}}$-spectral gap \cite{Roberts1997Hybrid}.  Note that the resulting proposal on $[-1,1]^J$ can be accepted even if some of the one-dimensional Metropolis-Hastings algorithms have rejected their proposal.

Alternatively, a Markov kernel with $L^2_{\mathcal{U}(-1,1)}$-spectral gap can be obtained by considering  a random walk with symmetric proposal 
\begin{align}
	Q_{\text{RW}}(x,dy)&=q(x-y)dy \nonumber \\
	Q_{\text{RW}}(x,dy)&=\mathcal{L}(x+\xi),\;\text{ where }\xi\sim \tilde{q} \nonumber
\end{align}
and repeatedly reflecting $y$ at the boundaries $-1$ and $1$. The reflection can be represented according to the following function \begin{eqnarray*}
R(x)=\begin{cases}
y & y\le1\\
2-y & 1<y<3\\
-4+y & 3\le y\le4
\end{cases},\text{ where }y=x\text{ mod }\,4.
\end{eqnarray*}
We call the  Metropolis-Hastings algorithm based on tensorisations of this proposal Reflection Random Walk Me\-tro\-po\-lis (RRWM) algorithm. In this way we can write the proposal kernel as 
\begin{equation}
Q^{\text{RRWM}}(x,dy)=\mathcal{L}\left(R(x+ \xi )\right) \label{eq:urmhPropLaw}
\end{equation}
 where $\xi\sim \tilde{q}$. Its density with respect to the Lebesgue measure takes the form
\begin{equation}
	q^{\text{RRWM}}(x,dy)=\sum_{k\in\mathbb{Z}} \tilde{q}(x-y+4k)+\tilde{q}(x+y+4k+2).
\end{equation}
The  proposal kernel $Q_{\text{RRWM}}$ is reversible with respect to $\mathcal{U}(-1,1)$ because $$q^{ \text{RRWM}}(x,y)=q^{\text{RRWM}}(y,x).$$
In the following we consider the RRWM algorithm with uniform random walk ($\xi\sim\mathcal{U}(-\epsilon,\epsilon)$) and with standard random walk ($\xi \sim \mathcal{N}(0,\epsilon^2)$) which we call Reflection Uniform Random Walk Metropolis (RURWM) and Reflection Standard Random Walk Metropolis (RSRWM) algorithm, respectively. In contrast to the RSRWM algorithm, the density of the RURWM has a closed form  $q^{\text{RURWM}}_{\epsilon}$. For $\epsilon<1$ it is given by
\begin{equation}
q^{\text{RURWM}}_{\epsilon}(x,y)  \propto  \begin{cases}
1 & \text{if}-1\leq x,y\leq1,\,\left|x-y\right|\leq\epsilon,y>-x-2+\epsilon,y<-x+2-\epsilon\\
2 & \text{if}-1\leq x,y\leq1,\, y\le-x-2+\epsilon\text{ or }y\ge-x+2-\epsilon\\
0 & \text{otherwise}
\end{cases}\label{eq:reflection}.
\end{equation}
The following result shows that the lazy versions of the  RURWM algorithms have an $L^{2}$-spectral gap of order $\epsilon ^2$.\\

\begin{theorem}
\label{thm:URMHproposal} There is $c>0$ such that the $L_{\mathcal{U}(-1,1)}^{2}$-spectral
gap $1-\beta_{\epsilon}$ of \textup{$Q_{\epsilon}^{\text{RURWM}}$
for }$\epsilon\leq1$ satisfies
$1-\beta_{\epsilon}\ge c\epsilon^{2}.$\\
\end{theorem}
\begin{proof}
See Appendix \ref{sec:appendixProof}.
\end{proof}\\

In a similar manner it can also be shown that the proposal of the RSRWM algorithm has an $L^2_{\mu_0}$-spectral gap of order $\epsilon$. In particular, the lower bound on the transition density of the random walk can be obtained more easily because the $n$-step transition kernel has an explicit form.

The lower bound on the spectral gap of the resulting lazy versions of Metropolis-Hastings algorithms follows now from Theorem \ref{thm:main}.\\

\begin{corollary}\label{thm:application}
Let $Q$ be a Markov kernel that has an $L^2_{\mathcal{U}_{(-1,1)}}$-spectral gap $1-\beta_\text{prop}$, $J\in \mathbb{N} \cup \{\infty \}$ and $Q_{J}=\bigotimes_{j=1}^{J}Q(\input_{j},d\tilde{a}_{j})$. Then the lazy version of the Metropolis-Hastings transition kernel ${\MHK}_{J}$ for $\posterior_J$  
 with proposal $Q_{J}$ has an $L^{2}_{\posterior_J}$-spectral gap $1-\beta_{J}$
and there is a $J$-independent lower bound of the form
\[  1-\beta \ge  \frac{1}{2}\exp\left(-8C^{2}\left\Vert \Gamma^-1\right\Vert N^{2}\left(\max_{i}\left\Vert l_{i}\right\Vert _{H^{-1}}\right)^{2}a_{\star}^{-2}\right)(1-\beta_\text{prop})^2.\]
\end{corollary}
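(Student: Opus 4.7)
The plan is to verify the hypotheses of the Main Theorem (Theorem~\ref{thm:proposalSpectralGap}) for the tensorised proposal $Q_J$ acting on the product prior $\prior^J = \mathcal{U}(-1,1)^{\otimes J}$, and then to insert the explicit bounds on the likelihood derived in the preceding subsection. The key structural ingredients are already in hand: reversibility of $Q$ with respect to $\mathcal{U}(-1,1)$ (for the examples like RURWM this was established by symmetry of the reflected transition density), and the lower bound $1-\beta_{\text{prop}}$ on the one-dimensional $L^2$-spectral gap (Theorem~\ref{thm:URMHproposal} or the analogous statement for RSRWM).

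First, I would invoke the tensorisation property of $L^2$-spectral gaps for reversible Markov kernels: if $Q$ is reversible with respect to $\mathcal{U}(-1,1)$ with spectral gap $1-\beta_{\text{prop}}$, then $Q_J = \bigotimes_{j=1}^J Q$ is reversible with respect to $\prior^J$ with the \emph{same} spectral gap $1-\beta_{\text{prop}}$, uniformly in $J\in\mathbb{N}\cup\{\infty\}$. This is the standard tensorisation result cited already in the paper (\cite{bakry2006functional,guionnet1801lectures}); the point is simply that the $J$-independence of the proposal gap is built in and requires no work beyond quoting the theorem. For $J=\infty$ one takes the projective limit, which is well defined because each $Q$ preserves the uniform one-dimensional marginal.

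Second, I would apply Theorem~\ref{thm:proposalSpectralGap} with reference measure $\prior^J$ and target $\posterior_J$. The hypothesis is that the density $L = d\posterior_J/d\prior^J$ is bounded above and away from zero; the preceding subsection provides precisely these bounds, namely $L^\star = 1$ and
\[
L_\star = \exp\!\left(-2C^{2}\norm{\Gamma^{-1}}_2 N^{2}\bigl(\max_{i}\norm{l_i}_{H^{-1}}\bigr)^{2}a_\star^{-2}\right),
\]
and, crucially, these bounds are $J$-independent because $C$, $\Gamma$, the $l_i$, and $a_\star$ were chosen uniformly in $J$. The lower inequality in Theorem~\ref{thm:proposalSpectralGap} then gives
\[
1-\beta_J \ge \left(\frac{L_\star}{L^\star}\right)^{4}\frac{(1-\beta_{\text{prop}})^{2}}{8} = \frac{L_\star^{4}(1-\beta_{\text{prop}})^{2}}{8},
\]
which, upon substituting $L_\star^4 = \exp(-8C^{2}\norm{\Gamma^{-1}}N^{2}(\max_{i}\norm{l_i}_{H^{-1}})^{2}a_\star^{-2})$, yields the stated bound (up to the harmless constant $1/8$).

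The only real obstacle is the tensorisation step in the limit $J=\infty$: one must be careful that the product proposal genuinely has the inherited spectral gap on the infinite product space, and that the acceptance ratio used to build $P_J$ is well defined in this limit. This is handled by noting that the acceptance probability only involves $L$, which is prior-a.s.\ bounded away from zero and one by the bounds above, so no Radon--Nikodym subtlety arises. Everything else is a mechanical substitution, and the $J$-independence of the final constant is immediate from the $J$-independence of $L_\star$ and $\beta_{\text{prop}}$.
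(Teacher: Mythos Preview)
Your proposal is correct and matches the paper's approach exactly: the corollary is stated without a separate proof, with the surrounding text indicating only that one combines the tensorisation property of $L^2$-spectral gaps (already cited for \cite{bakry2006functional,guionnet1801lectures}) with the likelihood bounds $L^\star=1$, $L_\star=\exp(-2C^{2}\norm{\Gamma^{-1}}N^{2}(\max_i\norm{l_i}_{H^{-1}})^{2}a_\star^{-2})$ and plugs them into Theorem~\ref{thm:proposalSpectralGap}. Your observation about the missing factor $1/8$ is also correct; the paper's displayed bound omits it (presumably absorbing it into the unspecified constant $C$), and your discussion of the $J=\infty$ case is more careful than anything appearing in the paper.
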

In this section, we have constructed the RRWM algorithm for the elliptic inverse problem with prior based on a series expansion with uniformly distributed coefficients. In the next section, we compare this algorithm to the IS and RWM algorithms using simulations.

\section{Numerical Comparison of Different MCMC Algorithms for a particular Elliptic Inverse Problem}\label{sec:num}

We apply the Random Walk Metropolis (RWM), the  Importance Sampling (IS) and the Reflection Random Walk Metropolis (RRWM) algorithms to the posterior arising from the elliptic inverse problem considered in Section \ref{sec:Application}. We use simulations to illustrate the following two aspects:
\begin{itemize}

\item On the one hand the acceptance probability of the standard RWM algorithm decreases quickly as the dimension of the state space increases. On the other hand, the relation between the step size and the acceptance probability of the IS and RRWM algorithms are not affected by the dimension.
\item The performance of  the IS algorithm is only affected up to a point by the dimension $J$ of the state space. However, it does not perform well for concentrated target measures. In contrast our simulations show that the choice of an appropriate step size for the RRWM algorithm leads to a good performance for the problem at hand. \end{itemize}

\noindent We first describe the implementation of the forward model, the choice of the prior and the implementation of the IS,  the RWM and the RRWM algorithms. Even-though our result only applies to the lazy version of the Metropolis-Hastings algorithm, we believe that this is artificial and present simulations for the non-lazy versions.

The remaining part of the section is then divided into presenting the dependence of the relationship  between step size and acceptance rate on the dimension as well as the decay of the autocorrelation. 

\subsection{The Setup}\label{sec:setup}
We consider the elliptic inverse problem as described in Section \ref{sec:Application} on the domain  $D=[0,1]$. In this case there is an explicit formula linking the pressure  $p$ and the diffusion coefficient $a$
which has been implemented using a trapezoidal rule. We choose the prior as in Equation (\ref{eq:priorExpansion}) on the coefficients $u_i$, that is
\[ \prior^J = \bigotimes_{j=0}^J  \mathcal{U}(-1,1).\]
These coefficients give rise to the diffusion coefficient
\begin{equation} \label{eq:simulations} \input(u)(x)  =  \bar{\input}(x)+\sum_{j=0}^J\gamma_{j}u_{j}\psi_{j}(x)\text{ where }u_{j}\overset{\text{i.i.d.}}{\sim}\mathcal{U}(-1,1). \end{equation}
For our simulations we set
\begin{eqnarray*}
    \bar{a}(x )&= &4.38.\\
\psi_{2j-1}(x) & = & \cos(2\pi jx),\,\;\gamma_{2j}=\frac{1}{j^{2}},\,K \ge  j\ge1\\
\psi_{2j}(x) & = & \sin(2\pi jx),\;\gamma_{2j-1}=\frac{1}{j^{2}},\, K \ge  j\ge1\\
\psi_{0}(x) & = & 1,\;\gamma_{0}=1
\end{eqnarray*}
where $K$ denotes the number of Fourier coefficients.
Note that the lower bound  $a(x)\ge 1$  is independent of $J=2K$. The data $\data$ corresponds to evaluations of the pressure $p$ on an evenly spaced grid. More precisely,
\[
\data=\opObs(\input^{\dagger})+\noise=(p(i \cdot \disObs) +\noise_i)_{i=0}^{\left\lfloor 1/\disObs\right\rfloor } 
\]
where $\noise\sim\mathcal{N}(0,\sigma^{2}I)$ and $\input^{\dagger}$ is a fixed draw from the prior. 

Subsequently, we consider the IS, the RWM, the RURWM and the RSRWM algorithms with the following proposal kernels
\begin{align*}
 Q^{\text{IS}}(x,dy) &= \prior(dy)\\
 Q_{\epsilon}^{\text{RWM}}(x,dy) &= \mathcal{N}\left(x,\epsilon I_{d\times d}\right)(dy)\\
Q_\epsilon^{\text{RURWM}}(x,dy)&=\otimes_{i=1}^{d}\mathcal{L}\left(R(x+\epsilon \xi) \right),\;\xi\sim \mathcal{U}(-1,1)\\
Q_\epsilon^{\text{RSRWM}}(x,dy)&=\otimes_{i=1}^{d}\mathcal{L}\left(R(x+\epsilon \xi) \right), \;\xi\sim \mathcal{N}(0,1).
\end{align*}

Note that the Metropolis-Hastings acceptance ratio, as described in Section \ref{sec:review}, implies that the RWM algorithm simply rejects any proposal outside the unit cube.

\subsection{Acceptance Probabilities for the RWM and the RRWM Algorithms}
In Figure \ref{fig:acc}, we have plotted the acceptance probability against the step size for the RWM, the RURWM and the RSRWM  algorithm for different choices of $K$. The target for both is the posterior arising from 33 artificially generated measurements with $\sigma=0.05$. 

The step size parameter  $\epsilon$ affects the performance of all three algorithms. On the one hand large step sizes are beneficial because the algorithm can explore the state space quicker whereas they lead to a small acceptance ratio (see Figure \ref{fig:acc}). On the other hand small step sizes lead to a high acceptance ratio but to highly correlated samples. The IS algorithm does not have a step size parameter and its average acceptance probability does not depend on the dimension. For this choice of parameters it is approximately $4.4\%$.

  Figure \ref{fig:acc} clearly illustrates that the acceptance probability of the RWM algorithm for a fixed step size deteriorates as the dimension increases. One reason for the decay of the acceptance probability of the RWM algorithm is that the probability of the proposal lying outside $[0,1]^d$ increases to $1$ as $d\rightarrow \infty$. Moreover, there is no visible impact of the dimension on the acceptance probability for the RURWM and the RSRWM algorithms.

\begin{figure}
\begin{center}
\subfloat[Acceptance rate vs. step size for the RWM algorithm]{\includegraphics[width=0.7\textwidth]{./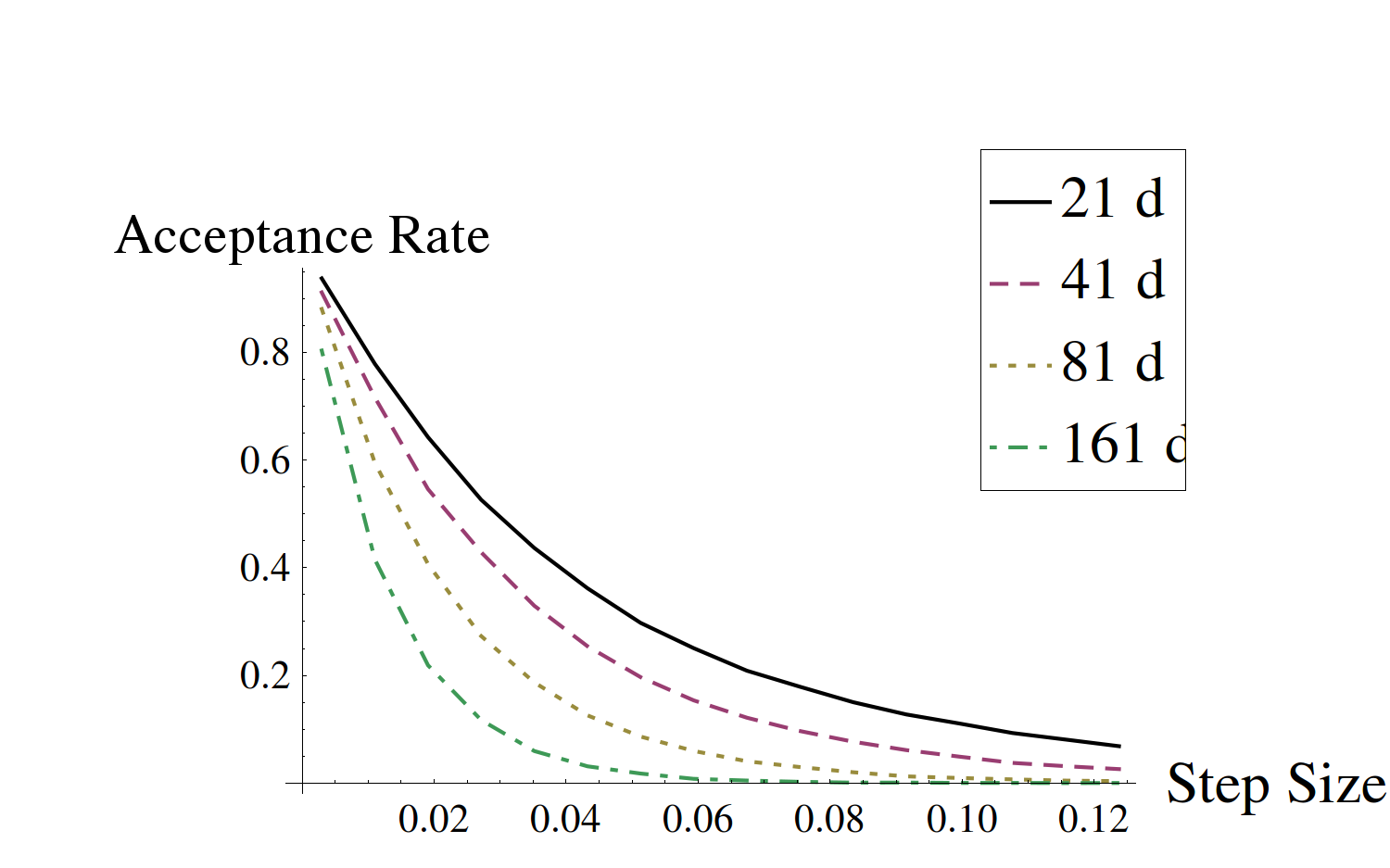}
}\\
\subfloat[Acceptance rate vs. step size for the RURWM algorithm ]{\includegraphics[width=0.7\textwidth]{./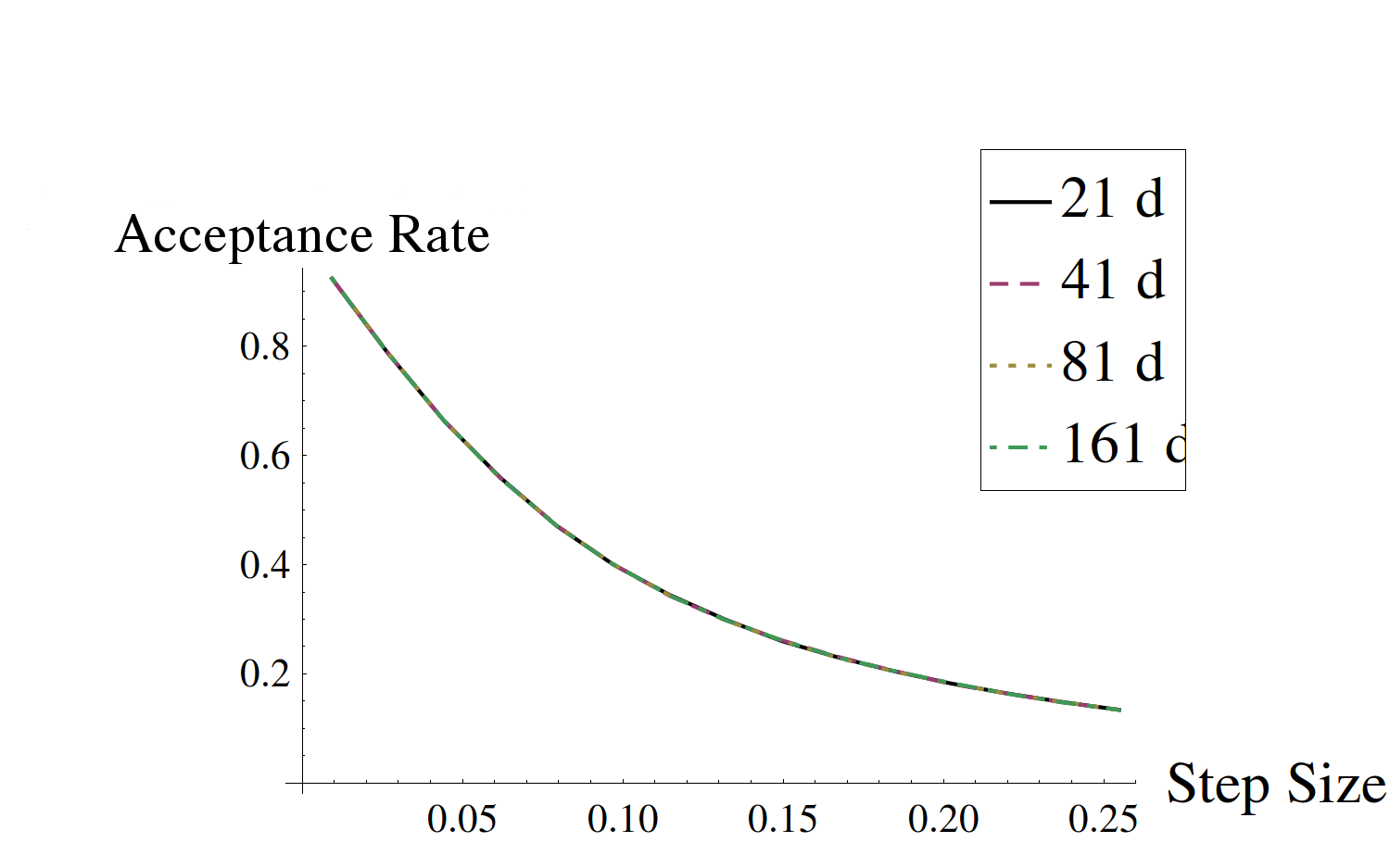}
}\\
\subfloat[Acceptance rate vs. step size for the RSRWM algorithm ]{\includegraphics[width=0.7\textwidth]{./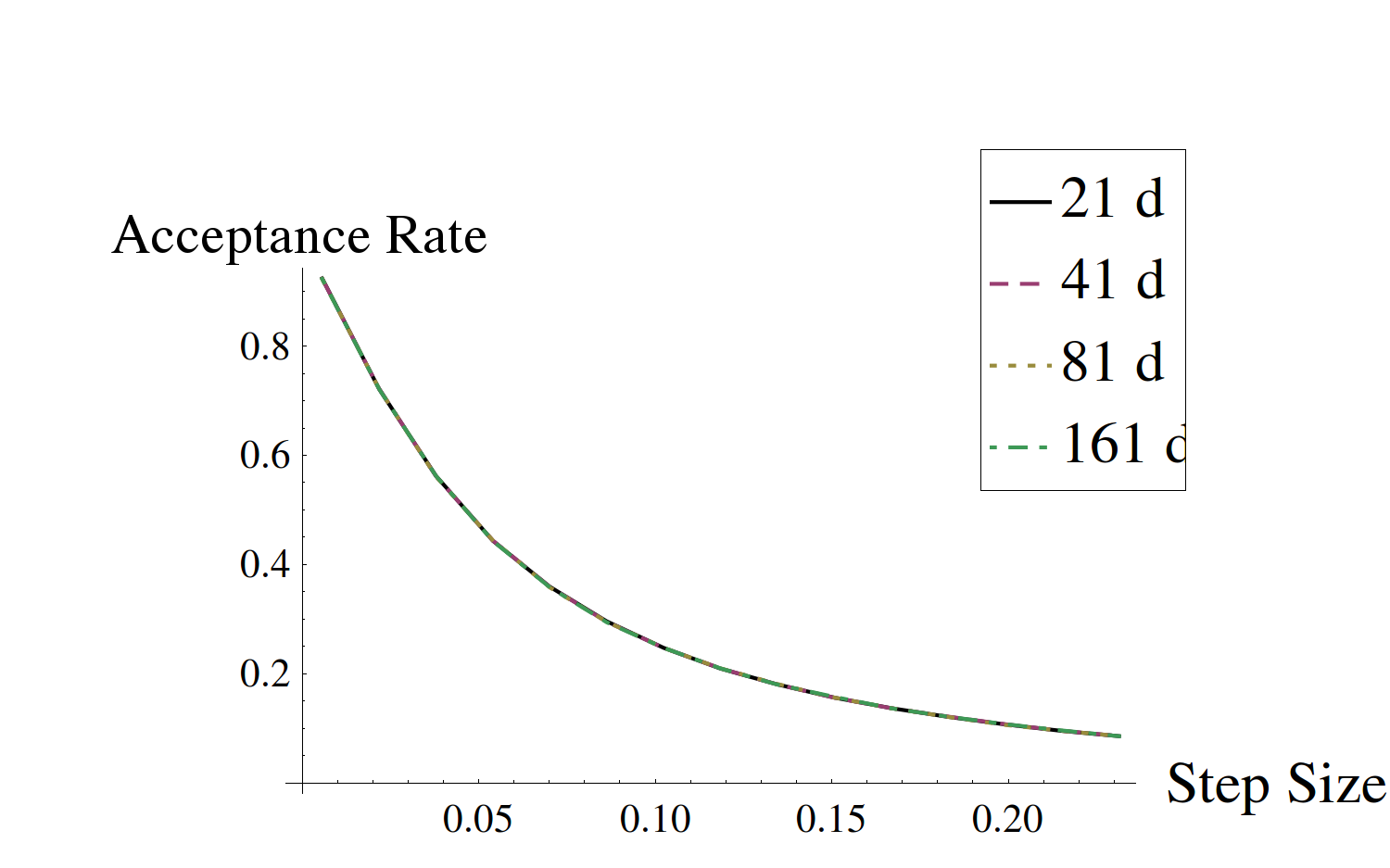}
}
\caption{\label{fig:acc}Dependence of the acceptance probability on the dimension}
\end{center}
\end{figure}

\subsection{Autocorrelation of the IS, the RWM, the RURWM and the RSRWM Algorithm}
Even though our  lower bound on the $L^2_\mu$-spectral gap is smaller for the RRWM algorithm than for the IS algorithm (cf. Remark \ref{rem:independenceSampler1}), the  numerical results in this section suggest that the RRWM algorithm
outperforms the IS algorithm especially if $\target$ is peaked. The peakedness of $\target$ is achieved by observing $\pressure$ on a fine mesh with small noise ($\disObs x=0.03$ and $\sigma=0.03$). 

The computational cost of both algorithms is nearly the same because
the cost of computing the likelihood is more expensive than generating
the proposal, which is slightly more expensive for the RRWM algorithm.
Subsequently, we compare the RWM, the IS, the RURWM and the RSRWM algorithm by plotting their autocorrelation. We consider $K=25$ ($K=250$) corresponding to an expansion with $25$ ($250$) sine and $25$ ($250$) cosine coefficients and a constant term thus giving rise to a 51 ($501$) dimensional problem.

In order to compare the RWM and the RRWM algorithms in a fair way, we choose the step size $\epsilon$ in a way to get an acceptance rate close to $0.135$. In the case of the RWM algorithm this is motivated by the optimal scaling results  in \cite{MR3025684}. The optimality of this acceptance rate is indicated by proving that the properly rescaled samples converge to a Langevin diffusion whose time scale depends on the acceptance rate of the RWM algorithm. An acceptance rate of $0.135$ corresponds to the largest time scale and thus to a quicker convergence to equilibrium of the Langevin diffusion. For the RRWM algorithm the acceptance rate is not affected by the choice of $J$. However, it is reasonable to choose a step size with acceptance probability bounded away from one and zero. 

For the lazy version of the RRWM algorithm we know that the $L^2_\mu$-spectral gap is bounded below and thus the asymptotic variance of the CLT (c.f. Proposition \ref{kipnisvara-clt}) for $f \in L^2_\target$ is bounded above.  The asymptotic variance can be related to the autocorrelation which is given by
\[
	c_i = \text{Cov}(f(X_0),f(X_i))
\] where $X_i$  is the evolution of the corresponding Markov chain. It is well-known that the asymptotic variance is equal to the integrated autocorrelation \cite{Rosenthal2007variance,Meyn:2009uqa} which is given by 
\[
	\sigma^2 = c_0+2 \sum_{i=0}^\infty  c_i.
\]
We consider the Markov chain resulting from the IS, the RWM, the RURWM and the RSRWM algorithm on the state space $[-1,1]^{J+1}$. We denote by  $u_i$ the $i=0,\dots,J$ projections onto the $i+1$-th coordinate. In the following we consider the autocorrelation for $u_0$ (c.f. Equation \ref{eq:simulations}) for the algorithms mentioned above.

Simulations for $d=0.1$ and $\sigma =0.1$ are presented in Figure \ref{fig:autocorr1} which shows that the autocorrelation of the RURWM, the RSRWM and the IS algorithm is only affected up to a point by the dimension of the state space. In contrast, the autocorrelation of the RWM decays much slower for the $501$-dimensional state space as for the $51$-dimensional state space.
In Figure \ref{fig:autocorr2}, we consider the decay of the autocorrelation of the IS, the RWM, the RURWM and the RSRWM algorithm for  more observations and lower observational noise ($d=0.04$ and $\sigma =0.03$). However, this implies that the measure concentrates in smaller regions of the state space making it harder to sample from.
Figure \ref{fig:autocorr2} illustrates that the RURWM and the RSRWM algorithm can be tuned to work well for concentrated target measures whereas the IS algorithm behaves poorly even though it is dimension independent.

For a fixed step size the RWM algorithm deteriorates as the dimension increases because the probability that one component steps outside $[-1,1]$ converges to one. If the step size is scaled to zero appropriately, the performance of the RWM algorithm deteriorates slower but for a large enough state space the IS algorithm outperforms the RWM algorithm for fixed observation operator and observational noise. The reason for this is that Corollary \ref{thm:application} yields a dimension independent lower bound on the performance of the IS, the RURWM and the RSRWM algorithm.

\begin{figure}
\begin{center}
\subfloat[51-dimensional state space, acceptance rate: RWM 14.1\%, IS 3.9\%,
RURWM 24.8\% RSRWM 27.7\% ]{\includegraphics[width=0.46\textwidth]{./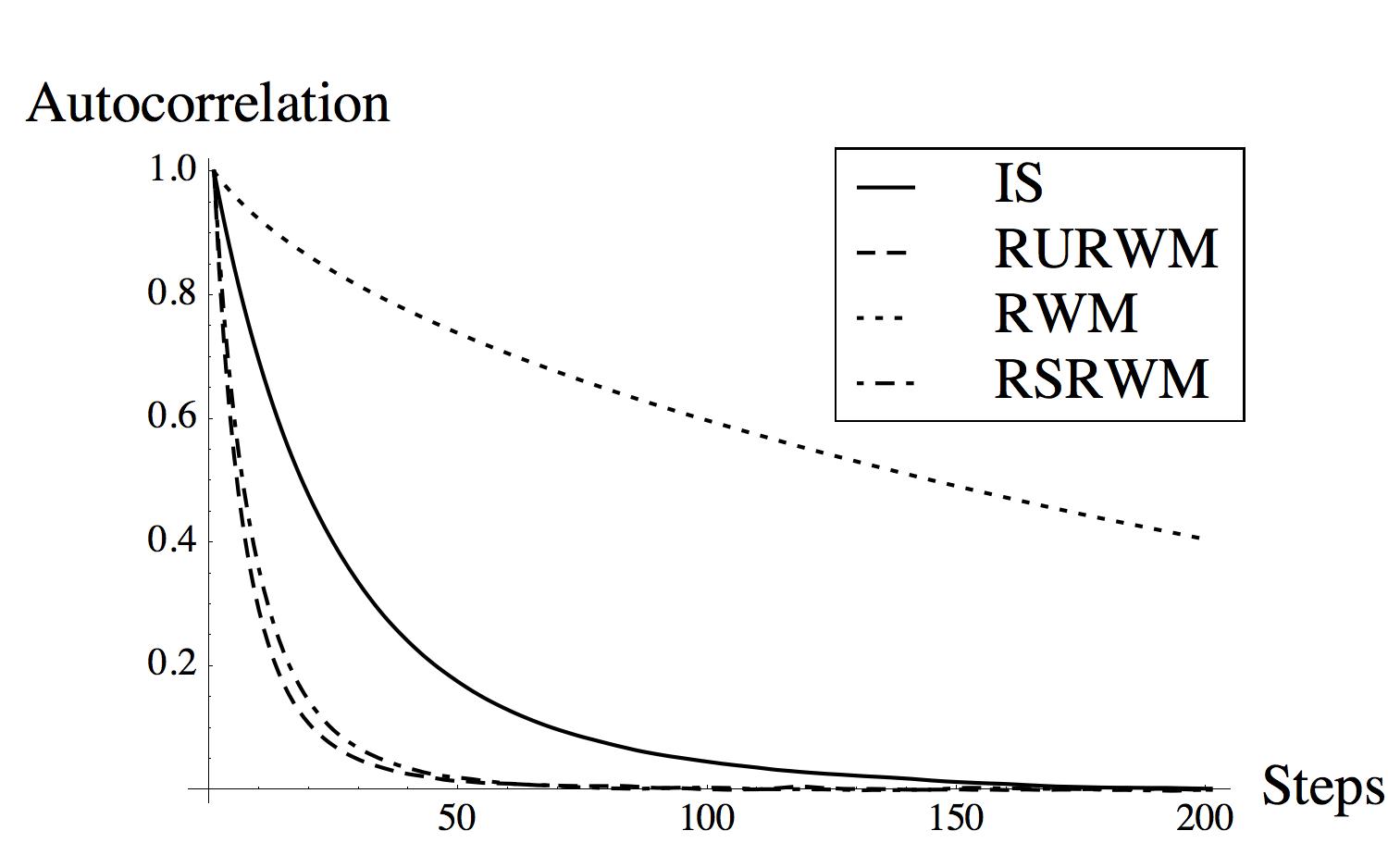}
}\hspace{0.05\textwidth}
\subfloat[501-dimensional state space, acceptance rate: IS 4.6\%, RWM 14.4\%,
RURWM 26.6\%, RSRWM 28.7\% ]{\includegraphics[width=0.46\textwidth]{./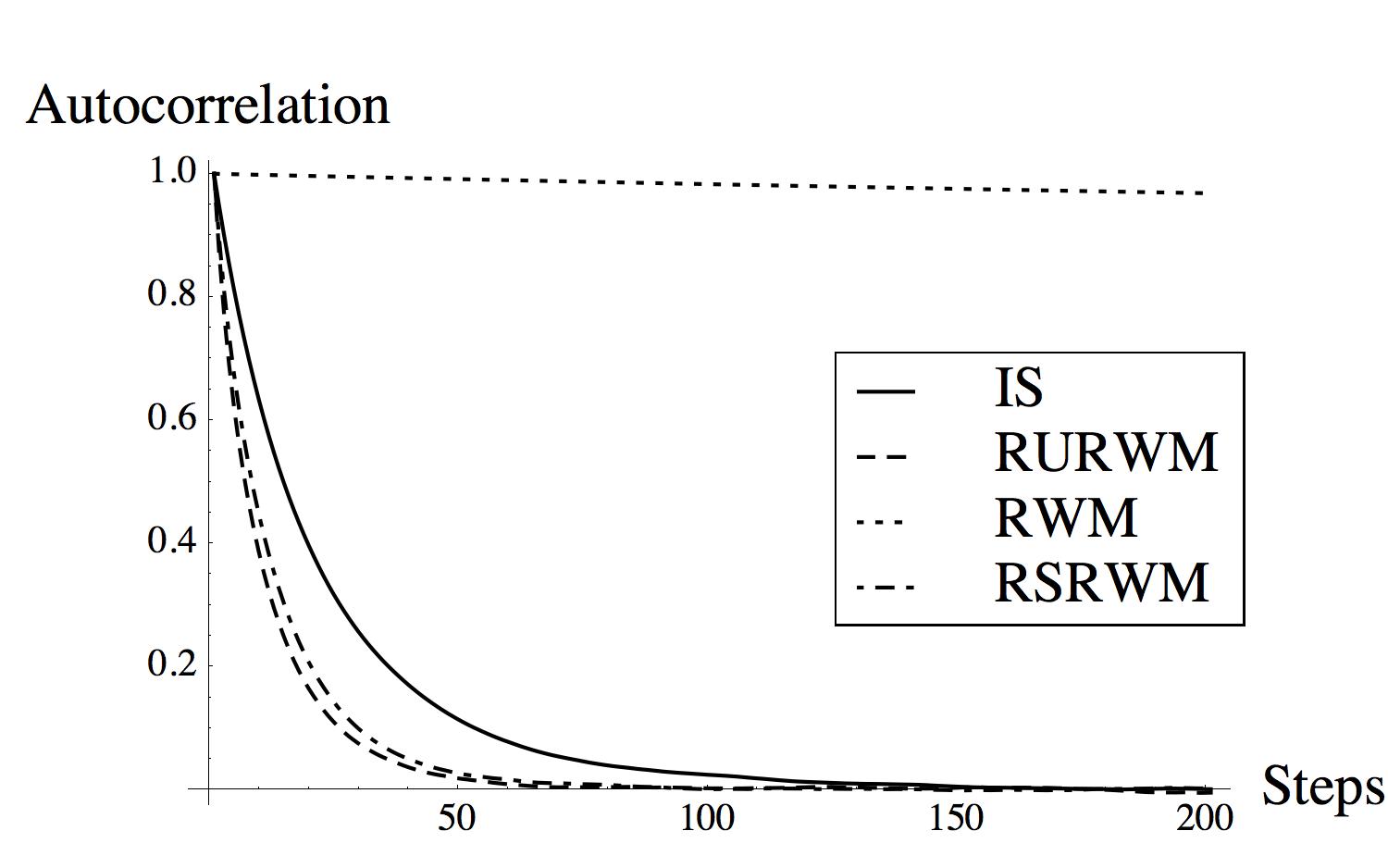}
}
\end{center}
\caption{\label{fig:autocorr1}Autocorrelation arising from a posterior for $\sigma$=0.1 and $\disObs=0.1$ }
\end{figure}

\begin{figure}
\begin{center}

\subfloat[51-dimensional state space, acceptance rate: IS 0.0001\%, RWM 25.4\%, RURWM 21\%,RSRWM 24.6\%]{\includegraphics[width=0.46\textwidth]{./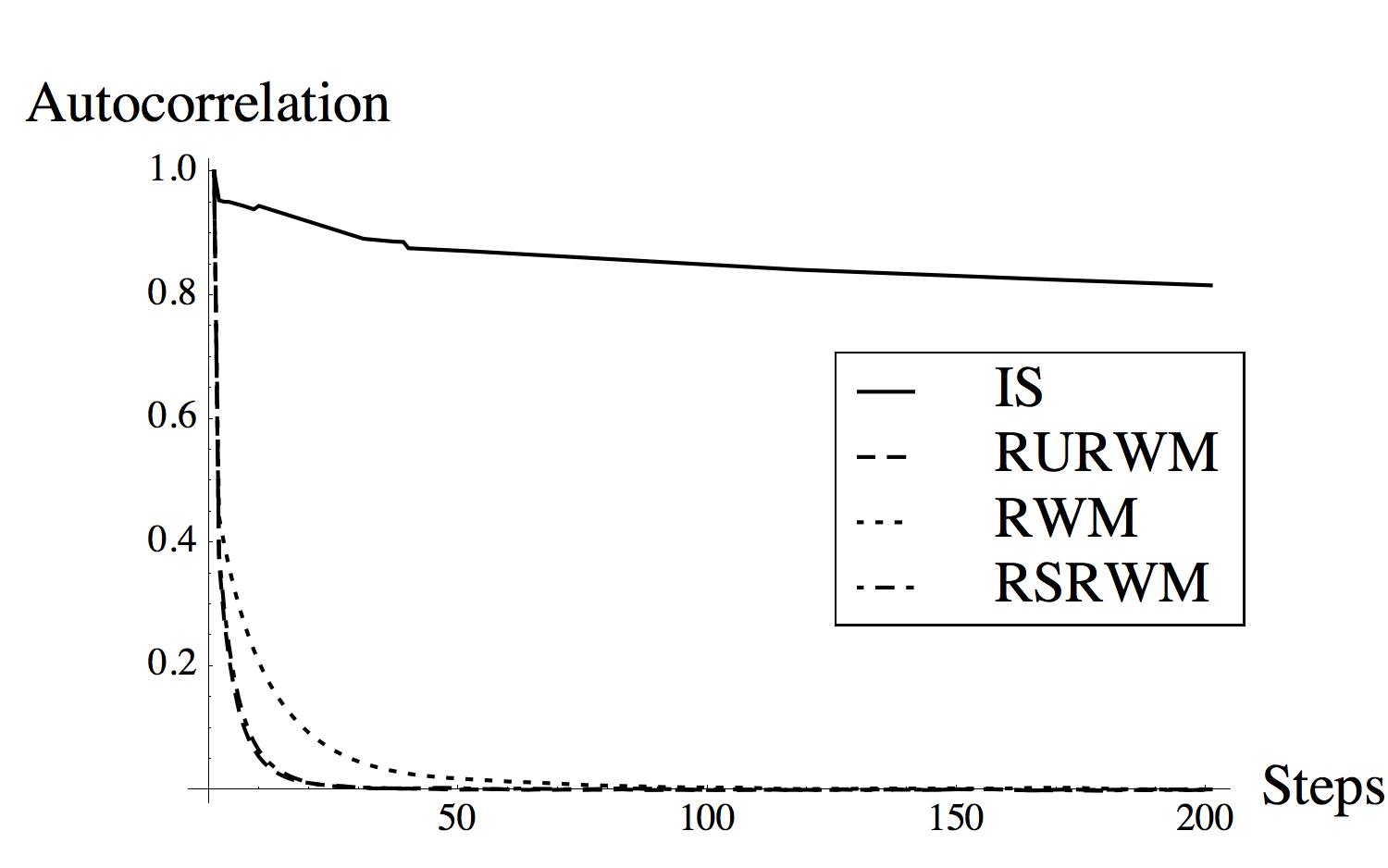}
}\hspace{0.05\textwidth}
\subfloat[501-dimensional state space, acceptance rate: IS 0.04\%, RWM 14.2\%, 
RURWM 30\%, RSRWM 25.5\%]{\includegraphics[width=0.46\textwidth]{./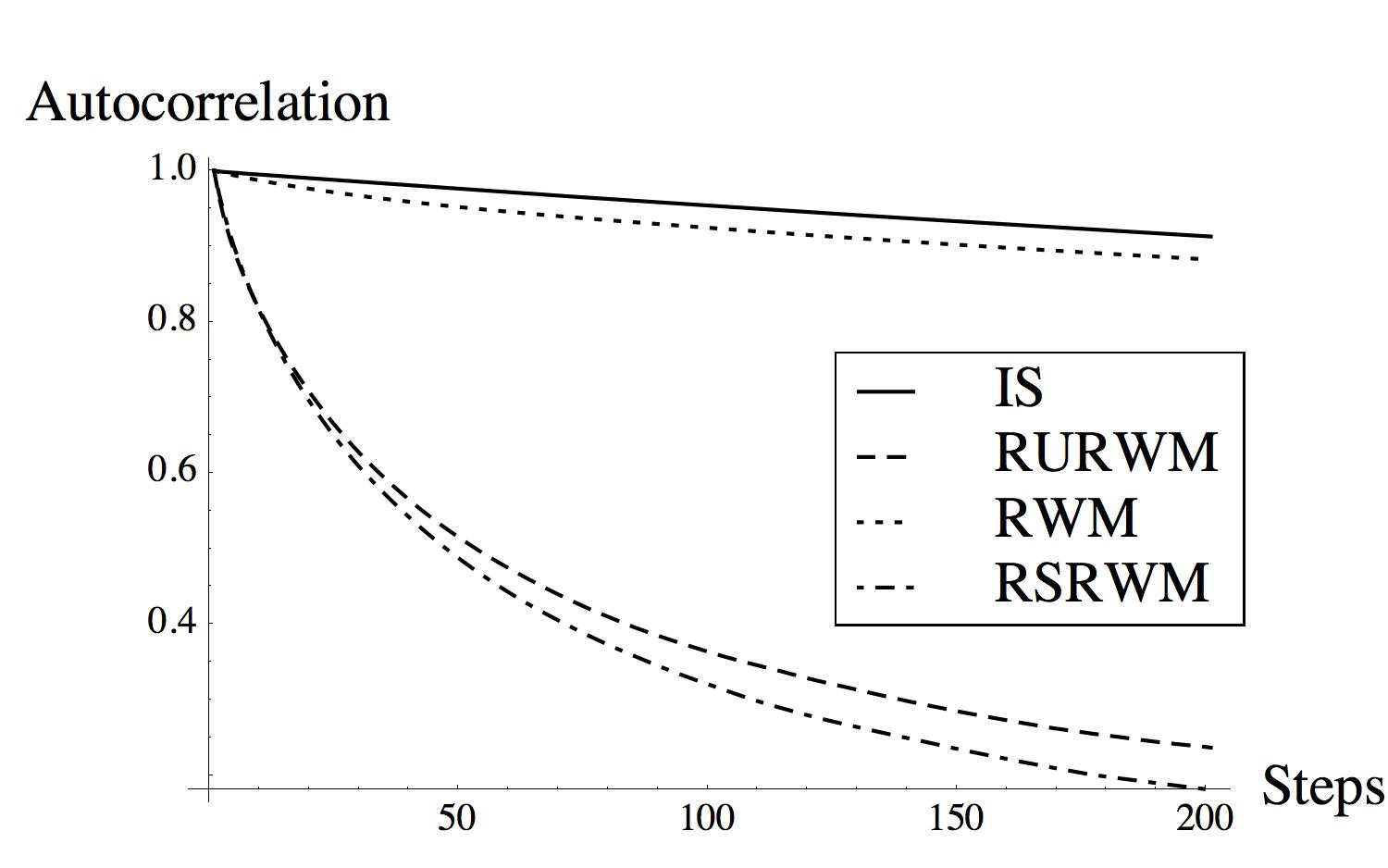}
}\\
\end{center}
\caption{\label{fig:autocorr2}Autocorrelation arising from posterior for $\sigma$=0.05 and $\disObs=0.05$}
\end{figure}

\section{Conclusion and Avenues of Further Research}

In this article, we have shown that it is possible to transfer $L^{2}$-spectral gaps from the proposal
Markov kernel to the lazy version of the Metropolis-Hastings Markov kernel. This yields
theoretical bounds for a large class of proposals for non-Gaussian
measures on function spaces. Our main assumption is that the density with respect to the reference measure is bounded above and below. This is a restrictive condition but it is difficult to prove any results in great generality
under weaker assumptions. The assumption that the density is bounded above and below on bounded sets seems weak enough. Both assumptions
only differ in the tails and restricting the problem to a large enough set decreases the probability of a sampling algorithm leaving it in the duration of a simulation to almost zero. But it is often the tail behaviour
which prevents algorithms from satisfying the desired convergence properties, see for example \cite{roberts1996exponential} which describes the phenomenon for the Langevin diffusion. This effect is also described in \cite{newMALA}, but it is not clear what impact this behaviour has on the sample average. 

Our main result justifies the use of sampling methods other than the IS algorithm for the
Bayesian elliptic inverse problem considered above. However, our bounds do not show that locally moving algorithms, as the RURWM and the RSRWM algorithm designed in Section 4, are asymptotically better
than the IS algorithm. Comparing two sampling algorithms is difficult since their performance depend on the specific target. Moreover, the performance also depends on the choice of the parameters, for example the step size of the algorithms. Nonetheless, rigorously  showing that the RURWM and the RSRWM algorithm outperform the IS algorithm, even in a special case, would be an interesting  result.

Moreover, the range of the posterior density goes to infinity as the variance of the noise goes to zero. This suggests that sampling methods perform worse and worse as the observational noise goes to zero. Getting precise asymptotics of this behaviour would lead to a better understanding of the performance of sampling algorithms for Bayesian inverse problems.

As mentioned in Section \ref{sec:Application}, the proposal kernels of the RURWM and the RSRWM algorithm are based on a tensorisation of Markov kernels for the uniform distribution on $[-1,1]$. It is also interesting to consider tensorisation of Metropolis-Hastings kernels   for the uniform distribution on $[-1,1]$.
Whereas we used the explicit structure of the prior, an interesting direction for more complicated priors is to use Metropolis-Hastings chains or combinations, such as tensorisation. This can lead to good proposals for another Metropolis-Hastings chain. Note that even if some of the  Metropolis-Hastings algorithms in the tensorisation reject their proposal, the overall proposal can still be accepted. A deeper investigation of this approach can lead to a better  understanding and guidelines  for the design of efficient proposals.  An interesting special case are MCMC algorithms for Bayesian inverse problems formulated on the coefficients of a Fourier series expansion. Usually the coefficients corresponding to high frequencies have only little impact on the forward problem and hence the inverse problem.  Developing proposals that exploit this phenomenon should also be pursued. 

One extension of this application which is of particular interest would be to consider a multi-scale diffusion coefficient because there is interest in the fine and coarse scale properties of the permeability for example in subsurface geophysics. Homogenisation results imply that different combinations of fine and coarse scales lead to effectively the same homogenised problem thus leading to a lack of identifiability. This also seems to be a very interesting idea.

\section*{Acknowledgements}
The author would like to thank Professor Andreas Eberle, Professor Martin Hairer and Professor Andrew Stuart for helpful discussions.  SJV is grateful for the support of an ERC scholarship.

 \appendix
 \section{Proof of Theorem \ref{thm:URMHproposal}}\label{sec:appendixProof}
We first prove that for $\epsilon$ small enough there is an
$\epsilon$-independent lower bound on the $L_{\prior}^{2}$-spectral
gap of $\left(Q_{\epsilon}^{\text{RURWM}}\right)^{n}$ with $n(\epsilon)=\left\lceil \frac{1}{\epsilon^{2}}\right\rceil $.
This is achieved by showing that $[-1,1]$ is a small set for $\left(Q_{\epsilon}^{\text{RURWM}}\right)^{n}$.
This implies uniform ergodicity and in turn a lower bound on the $L_{\mathcal{U}(-1,1)}^{2}$-spectral
gap of $\left(Q_{\epsilon}^{\text{RURWM}}\right)^{n}$. A lower bound
on the $L_{\mathcal{U}(-1,1)}^{2}$-spectral gap of $Q_{\epsilon}^{\text{RURWM}}$
can then be obtained using the spectral theorem. By $\tilde{q}_{\epsilon}(x,y)=\mathbbm{1}_{(x-\epsilon,x+\epsilon)}(y)$
we denote the density of the unreflected random walk. The density
$q_{\epsilon,n}^{\text{RURWM}}$ of $\left(Q_{\epsilon}^{\text{RURWM}}\right)^{n}$
is point-wise larger than $\tilde{q}_{\epsilon,n}$ because each $y$
might have several preimages under $R$ (c.f. Equation (\ref{eq:reflection})).
In order to show that $[-1,1]$ is a small set, we need to obtain a
uniformly lower bound on the transition density $\tilde{q}_{\epsilon,n}$.
This is achieved using a local limit theorem from \cite{petrov1975sumiid}.\\
\begin{theorem}
(Theorem 13 in Section 7 of \cite{petrov1975sumiid}) Let $\left\{ X_{n}\right\} $
be a sequence of independent random variables having a common distribution
with zero mean, non-zero variance, and finite absolute moment $\mathbb{E}\left|X_{1}\right|^{k}$
of some integer order $k\ge3$. Let  $p_{N}(x)$ be the density of the random variable $\frac{1}{\sigma\sqrt{n}}\sum_{j=1}^{n}X_{j}$. Then 
\[
p_{n}(x)=\phi(x)+\sum_{v=1}^{k-2}\frac{q_{\nu}(x)}{n^{\nu/2}}+o\left(\frac{1}{n^{(k-2)/2}}\right)
\]
 where $\phi(x)=\frac{1}{\sqrt{2\pi}}\exp\left(-\frac{x^{2}}{2}\right)$
is the density of $\mathcal{N}(0,1).$\\
\end{theorem}

For our case we are able to obtain the following corollary.\\
\begin{corollary}
\label{cor:unilocal}Let $U_{n}\overset{\text{i.i.d.}}{\sim}\mathcal{U}(-1,1)$
then the density of $p_{n}$ of $\frac{1}{\sqrt{n/3}}\sum_{j=1}^{n}U_{j}$
satisfies
\[
p_{n}(x)=\phi(x)+O\left(\frac{1}{n}\right).
\]
 \end{corollary}We denote the probability density of $\epsilon\sum_{i=1}^{n}U_{i}$
by $\tilde{p}_{n}^{\epsilon}(x)$ which is related to $p_{n}$ through
\[
\tilde{p}_{n}^{\epsilon}(x)=p_{n}^{\epsilon}\left(\frac{x}{\epsilon\sqrt{\frac{1}{3}n}}\right)\frac{1}{\sqrt{\frac{1}{3}n}\epsilon}.
\]
Using $n(\epsilon)=\left\lceil \frac{1}{\epsilon^{2}}\right\rceil $
and Corollary \foreignlanguage{british}{\ref{cor:unilocal}, we know
that} 
\begin{eqnarray*}
\left|\tilde{p}_{n(\epsilon)}^{\epsilon}(x)-\phi\left(\frac{x}{\epsilon\sqrt{\frac{1}{3}n(\epsilon)}}\right)\frac{1}{\epsilon\sqrt{\frac{1}{3}n(\epsilon)}}\right| & \leq & \frac{1}{\epsilon\sqrt{\frac{1}{3}n(\epsilon)}}\left|p_{n}\left(\frac{x}{\epsilon\sqrt{\frac{1}{3}n(\epsilon)}}\right)-\phi\left(\frac{x}{\epsilon\sqrt{\frac{1}{3}n(\epsilon)}}\right)\right|\\
 & \leq & \sqrt{3}C\frac{1}{n(\epsilon)}.
\end{eqnarray*}
Since $p_{n}$ is symmetric and log-concave \[\inf_{x\in[-2,2]}\tilde{p}_{n(\epsilon)}(x)=\tilde{p}_{n(\epsilon)}^{\epsilon}(2),\]
the aim is to obtain a lower bound on $\tilde{p}_{n}^{\epsilon}(2).$
This is achieved by noting that
\[
\phi\left(\frac{2}{\epsilon\sqrt{\frac{1}{3}n(\epsilon)}}\right)\frac{1}{\epsilon\sqrt{\frac{1}{3}n(\epsilon)}}\ge\phi\left(2\sqrt{3}\right)\frac{\sqrt{3}}{2}=:2l.
\]
For all $\epsilon\leq\epsilon_{0}$ small enough and hence $n(\epsilon)$
large enough
\[
\left|\tilde{p}_{n(\epsilon)}^{\epsilon}(x)-\phi\left(\frac{x}{\epsilon\sqrt{\frac{1}{3}n(\epsilon)}}\right)\frac{1}{\epsilon\sqrt{\frac{1}{3}n(\epsilon)}}\right|\leq l\quad\forall x.
\]
Using the triangle inequality, this yields a uniform lower bound on
the transition kernel 
\[
\tilde{q}_{\epsilon,n}(x,y)\ge\tilde{p}_{n(\epsilon)}^{\epsilon}(2)\ge\phi\left(\frac{2}{\epsilon\sqrt{\frac{1}{3}n(\epsilon)}}\right)\frac{1}{\epsilon\sqrt{\frac{1}{3}n(\epsilon)}}-l\ge l\quad\forall x,y\in[-1,1].
\]
Therefore $q_{\epsilon,n}^{\text{RURWM}}$ also satisfies 
\[
q_{\epsilon,n}^{\text{RURWM}}(x,y)\ge\tilde{q}_{\epsilon,n}(x,y)\ge l\quad\forall x,y\in[-1,1].
\]
Thus, the state space $[-1,1]$ is a small set and hence we can apply Theorem 8 in \cite{roberts2004general}
which implies that 
\[
\left\Vert \left(Q_{\epsilon}^{\text{RURWM}}\right)^{n\cdot k}(d,dy)-\mathcal{U}(-1,1)\right\Vert _{\text{TV}}\leq(1-l)^{k}.
\]
For reversible Markov processes uniform ergodicity implies an $L_{\mathcal{U}(-1,1)}^{2}$-spectral
gap of the same size, see for example \cite{explicitbdd}. Hence $\left(Q_{\epsilon}^{\text{RURWM}}\right)^{n}$
has an $L_{\mathcal{U}(-1,1)}^{2}$-spectral gap of size 1-$\hat{\beta}=l$.
The $L_{\mathcal{U}(-1,1)}^{2}$-spectral theorem for self-adjoint
operators now implies that the $L_{\mathcal{U}(-1,1)}^{2}$-spectral
gap of $Q_{\epsilon}^{\text{RURWM}}$ is
\[
1-\beta_{\epsilon}=1-(1-l)^{\frac{1}{n}}\ge\frac{l}{n}\ge\frac{l}{2}\epsilon^{2}.
\]
It is left to treat $1\ge\epsilon>\epsilon_{0}$.  For this range of $\epsilon$
we choose $n=n(\epsilon_{0})=\left\lceil \frac{1}{\epsilon_{0}^{2}}\right\rceil $ so that
\begin{align*}
\left| \tilde{p}_{n(\epsilon_{0})}^{\epsilon} \left(x\right) -\phi\left(\frac{x}{\epsilon\sqrt{\frac{1}{3}n(\epsilon_{0})}}\right) \frac{1}{\epsilon\sqrt{\frac{1}{3}n(\epsilon_{0})}}
 \right| \\
 =\left|p_{n(\epsilon_{0})}\left(\frac{x}{\epsilon\sqrt{\frac{1}{3}n(\epsilon_{0})}}\right)\frac{1}{\epsilon\sqrt{\frac{1}{3}n(\epsilon_{0})}}-\phi\left(\frac{x}{\epsilon\sqrt{\frac{1}{3}n(\epsilon_{0})}}\right)\frac{1}{\epsilon\sqrt{\frac{1}{3}n(\epsilon_{0})}}\right| \\
 \leq & \frac{\sqrt{3}\epsilon_{0}}{\epsilon}C\frac{1}{n(\epsilon_{0})}\leq  \frac{\epsilon_{0}}{\epsilon}l
\end{align*}

On the other hand
\[
\phi\left(\frac{2}{\epsilon\sqrt{\frac{1}{3}n(\epsilon_{0})}}\right)\frac{1}{\epsilon\sqrt{\frac{1}{3}n(\epsilon_{0})}}\ge\phi\left(\sqrt{3}2\frac{\epsilon_{0}}{\epsilon}\right)\frac{\epsilon_{0}}{\epsilon}\frac{\sqrt{3}}{2}\ge\frac{\epsilon_{0}}{\epsilon}2l.
\]
Similarly to the above, we know that
\[
q_{\epsilon,n}^{\text{RURWM}}(x,y)\ge\tilde{q}_{\epsilon,n}(x,y)\ge\frac{\epsilon_{0}}{\epsilon}l\quad\forall x,y\in[-1,1].
\]
Hence it follows that the $L_{\mathcal{U}(-1,1)}^{2}$-spectral
gap of $(Q_{\epsilon}^{\text{RURWM}})^{n(\epsilon_{0})}$ is bounded
below by $\frac{\epsilon_{0}}{\epsilon}l$. Using the spectral theorem,
we conclude that the $L_{\mathcal{U}(-1,1)}^{2}$-spectral gap $1-\beta_{\epsilon}$
of $Q_{\epsilon}^{\text{RURWM}}$ satisfies
\[
1-\beta_{\epsilon}\ge1-\left(1-\frac{\epsilon_{0}}{\epsilon}l\right)^{\frac{1}{n(\epsilon)}}\ge\frac{\epsilon_{0}}{\epsilon}l\frac{1}{n(\epsilon)}\ge\frac{\epsilon_{0}^{3}}{2\epsilon}l=\frac{\epsilon_{0}^{3}}{\epsilon^{3}}\frac{l}{2}\epsilon^{2}\ge\epsilon_{0}^{3}\frac{l}{2}\epsilon^{2}.
\]

\bibliographystyle{plain}
\bibliography{./../../../../docearcorrect}

\end{document}